\let\mathcal\mathscr
\def\Z{{\bf Z}}
\def\C{{\bf C}}
\def\P{{\bf P}}
\def\T{{\bf T}}
\def\ppavs{principally polarized abelian varieties}
\def\phi{{\varphi}}
\def\cI{\mathcal{I}}
\def\cA{\mathcal{A}}
\def\cB{\mathcal{B}}
\def\cO{\mathcal{O}}
\def\cP{\mathcal{P}}
\def\cC{\mathcal{C}}
\def\cM{\mathcal{M}}
\def\cN{\mathcal{N}}
\def\cX{\mathcal{X}}
\def\lra{\longrightarrow}
\def\llra{\hbox to 10mm{\rightarrowfill}}
\def\lllra{\hbox to 15mm{\rightarrowfill}}
\def\llla{\hbox to 10mm{\leftarrowfill}}
\def\lllla{\hbox to 15mm{\leftarrowfill}}
\def\dra{\dashrightarrow}
\def\isom{\simeq}
\def\eps{\varepsilon}
\def\ie{\hbox{i.e.}}
\def\vide{\varnothing}
\DeclareMathOperator{\isomto}{\stackrel{{}_{\scriptstyle\sim}}{\to}}
\DeclareMathOperator{\isomlra}{\stackrel{{}_{\scriptstyle\sim}}{\lra}}
\DeclareMathOperator{\rank}{rank}
\DeclareMathOperator{\Ker}{ker}
\DeclareMathOperator{\Pic}{Pic}
\DeclareMathOperator{\Nm}{Nm}
\DeclareMathOperator{\Sing}{Sing}
\DeclareMathOperator{\Prym}{Prym}
\DeclareMathOperator{\Aut}{Aut}
\DeclareMathOperator{\PGL}{PGL}
\def\llra{\hbox to 10mm{\rightarrowfill}}
\def\lllra{\hbox to 15mm{\rightarrowfill}}
\newtheorem{lemm}{Lemma}[section]
\newtheorem{theo}[lemm]{Theorem}
\newtheorem{coro}[lemm]{Corollary}
\newtheorem{prop}[lemm]{Proposition}
\theoremstyle{definition}
\newtheorem{rema}[lemm]{Remark}
\newtheorem{conj}[lemm]{Conjecture}
\theoremstyle{remark}
\newtheorem*{remark*}{Remark}
\newtheorem*{note*}{Note}
\begin{document}
\title{On nodal prime Fano threefolds of degree 10}
\author[O. Debarre]{Olivier Debarre}
\thanks{O. Debarre and L. Manivel are part of the  project VSHMOD-2009   ANR-09-
BLAN-0104-01.}
\address{D\'epartement de  Math\'ematiques et Applications -- CNRS UMR 8553,
\'Ecole Normale Sup\'erieure,
45 rue d'Ulm, 75230 Paris cedex 05, France}
\email{{\tt Olivier.Debarre@ens.fr}}
\author[A. Iliev]{Atanas Iliev}
\address{Institute of Mathematics,
Bulgarian Academy of Sciences,
Acad. G. Bonchev Str., bl. 8, 
1113 Sofia, Bulgaria}
\email{{\tt ailiev@math.bas.bg}}
\author[L. Manivel]{Laurent Manivel}
\address{Institut Fourier,  
Universit\'e de Grenoble I et CNRS,
BP 74, 38402 Saint-Martin d'H\`eres, France}
\email{{\tt Laurent.Manivel@ujf-grenoble.fr}}

\def\moins{\mathop{\hbox{\vrule height 3pt depth -2pt
width 5pt}\,}}

\begin{abstract}
We study the geometry and the period map of nodal     complex prime Fano threefolds  with index 1 and degree 10. We show that these threefolds are birationally isomorphic to Verra solids, \ie, hypersurfaces of bidegree $(2,2)$ in $ \P^2\times \P^2$. Using Verra's results on the period map for these solids and on the Prym map for double \'etale covers of plane sextic curves, we prove that the fiber of the   period map for our nodal threefolds is birationally the union of two surfaces, for which we give several descriptions. 
This result is the analog in the nodal case of a result of \cite{dim} in the smooth case.
\end{abstract}

\maketitle
\hfill{\it Dedicated to Fabrizio Catanese on his 60th birthday}


\tableofcontents
 
 \section{Introduction}\label{intro}
 
\noindent{\bf Nodal prime Fano threefolds of degree 10.} There are 10 irreducible families of   
smooth complex Fano threefolds $X$ with Picard   group $ \Z[K_X]$, one for each degree $(-K_X)^3=2g-2$, where $g\in\{2,3,\dots,10,12\}$. 
  The article is a sequel to \cite{dim}, where we studied the geometry and the period map of {\em smooth}      complex Fano threefolds $X$ with Picard group $ \Z[K_X]$ and degree $ 10$. Following again Logachev (\cite{lo}, \S5), we study here 
  complex Fano threefolds $X$ with   Picard group $ \Z[K_X]$ and  degree $10$ which are general {\em  with one node $O$.} They are degenerations of their smooth counterparts and their geometry is made easier to study by the fact that they are (in two ways) birationally conic bundles over $\P^2$.

  \smallskip
\noindent{\bf Two conic bundle structures.} More precisely, the nodal variety $X$ is anticanonically embedded in $\P^7$, and  it has long been known  that the projection from its node $O$ maps $X$ birationally onto a (singular) intersection of three quadrics   $X_O\subset \P^6$. The variety  $X_O$, hence also $X$,   is  therefore   (birationally) a conic bundle, with discriminant a septic curve $\Gamma_7$,  union of a line $\Gamma_1$ and a smooth sextic $\Gamma_6$ (see \cite{bbb}, 5.6.2), and associated connected double \'etale cover $\pi:\widetilde \Gamma_6\cup \Gamma_1^1\cup\Gamma_1^2\to \Gamma_6\cup \Gamma_1$ (\S\ref{g6}). On the other hand,  
 the
  ``double projection'' of $X$ from  $O$ (\ie, the linear projection from the 4-dimensional embedded Zariski tangent space at $O$) is also (birationally) a conic bundle, with discriminant curve another smooth  plane sextic $\Gamma_6^\star$ and associated connected double \'etale cover $\pi^\star:\widetilde \Gamma_6^\star\to \Gamma_6^\star$ (\S\ref{pw}).  
 
  \smallskip
\noindent{\bf Birational isomorphism with Verra solids.}  We show (Theorem \ref{nodver}) that these two conic bundle structures define a birational map from $X$ onto a (general)   hypersurface $T\subset \P^2\times \P^2$ of bidegree $(2,2)$. These threefolds $T$ were studied by Verra in \cite{ver}; both projections to $\P^2$ are conic bundles and define connected  double \'etale covers $\pi$ and $\pi^\star$ of the discriminant curves, which are nonisomorphic smooth sextics. In particular, the associated Prym varieties $\Prym(\pi)$ and $\Prym(\pi^\star)$ are isomorphic (to the intermediate Jacobian   $J(T)$) and Verra   showed that the Prym map from the space of connected double \'etale covers of   smooth plane sextics to the moduli space of 9-dimensional \ppavs\ has degree   2.
  
  \smallskip
\noindent{\bf Fibers of the period map.}  This information is very useful for the determination of the fiber of the period map of our nodal threefolds $X$, \ie, for the description of all nodal threefolds of the same type with     intermediate Jacobian isomorphic to $J(X)$. This $J(X)$ is a 10-dimensional  extension by $\C^*$ of the intermediate Jacobian of its normalization, which is also the intermediate Jacobian of $T$, and the extension class depends only on $T$ (\S\ref{epm}). A nodal $X$ can be uniquely reconstructed from the data of a general sextic $\Gamma_6$ and an even   thetacharacteristic $M$ on the union of $\Gamma_6$ and a line $ \Gamma_1$: by work of Dixon, Catanese, and Beauville, the  sheaf $M$ (on $\P^2$) has a free resolution by a $7\times 7$ symmetric matrix of linear forms, which defines the net of quadrics whose intersection is  $X_O$ (Theorem \ref{tcM} and Remark \ref{conv}). We show that given the double cover $\pi:\widetilde \Gamma_6 \to \Gamma_6$, the set of even thetacharacteristics $M$
on    $\Gamma_6\cup \Gamma_1$, which induce $\pi$ on $\Gamma_6$, is isomorphic to the quotient of the  {\em special surface} $S^{\rm odd}(\pi)$  (as defined in \cite{spe}) by its natural involution $\sigma$ (Proposition \ref{MS}). Together with Verra's result, this implies that {\em the general fiber of the period map for our nodal threefolds is birationally the union of the two surfaces} $S^{\rm odd}(\pi)/\sigma$ and $S^{\rm odd}(\pi^\star)/\sigma^\star$ (\S\ref{epm}).

 A result of Logachev (\cite{lo}, Proposition 5.8) says that the surface $S^{\rm odd}$ is also isomorphic to the minimal model $F_m(X)$ of the normalization of the Fano surface $F_g(X)$ of conics contained in $X$. On the one hand, we obtain the analog in the nodal case of the reconstruction theorem of \cite{dim}, Theorem 9.1: a general  nodal $X$ can be reconstructed from the surface $F_g(X)$. On the other hand, the present description of the fiber of the period map at a nodal $X$  fits in with the construction in \cite{dim} of two (proper smooth) surfaces in the fiber of the period map at a smooth $X'$, one of them isomorphic  to $F_m(X')/\iota$ (\cite{dim}, Theorem 6.4).
In both the smooth and nodal cases, the threefolds in the fiber of the period map are obtained one from another by explicit birational transformations called line and conic transformations (see \S\ref{lt} and \S\ref{ct}).
  
 Unfortunately, because of properness issues, we cannot deduce from our present results that a general fiber of the period map for smooth   prime Fano threefolds of degree 10 consists of just these two surfaces (although we prove here that these surfaces are distinct, which was a point missing from \cite{dim}), although we certainly think that this is the case.
 
  \smallskip
\noindent{\bf Singularities of the theta divisor.} The singular locus of the theta divisor of the intermediate Jacobian of a Verra solid was described in \cite{ver}. This description fits in with a conjectural description of the singular locus of the theta divisor of the intermediate Jacobian of a smooth prime Fano threefold of degree 10 that we give in \S\ref{sings}. This conjecture would imply   the conjecture about the general fiber of the period map mentioned above.

 \section{Notation}
 
  $\bullet$ As a general rule, $V_m$ denotes an $m$-dimensional (complex) vector space, $\P_m$ an $m$-dimensional  projective space, and $\Gamma_m$ a plane curve of degree $m$. We fix a $5$-dimensional complex vector space
  $V_5$  and we consider the Pl\"ucker   embedding $G(2,V_5) \subset \P_9=\P(\wedge^2 V_5)$ and its 
  smooth intersection $W$ with a general $\P_7$ (\S\ref{fw}).
  
  $\bullet$  $O\in W$ is a general point,   $\Omega\subset \P_9$ is a general quadric with vertex $O$, and $X=W\cap\Omega\subset \P_7$ is an anticanonically embedded    prime Fano threefold  with one node at $O$;  $\widetilde X\to X$ is the blow-up of $O$.

  $\bullet$   $p_O:\P_7\dra\P^6_O$ is the projection from $O$. We write  $ \Omega_O=p_O(\Omega)\subset \P^6_O $, general quadric, $ W_O=p_O(W)\subset \P^6_O $, base-locus of a pencil $\Gamma_1 $ of quadrics or rank 6, and $ X_O=p_O(X)\subset \P^6_O $, base-locus of the net $\Pi=\langle \Gamma_1,\Omega_O\rangle$ of quadrics.

  $\bullet$ $W_O$ contains  the $3$-plane $\P^3_W= p_O(\T_{W,O} )$ and $X_O$ contains the smooth quadric surface $Q=\P^3_W\cap \Omega_O$. The singular locus of $W_O$   is a rational normal cubic curve $C_O\subset \P^3_W$. The singular locus of $X_O$ is  $Q\cap C_O=\{s_1,\dots,s_6\}$.
  
    $\bullet$   $\widetilde \P^6_O\to  \P^6_O $ is the blow-up along $\P^3_W$, $\widetilde W_O\subset \widetilde \P^6_O$ is the (smooth) strict transform of $W_O$,
 and $\widetilde X_O\subset \widetilde W_O$   the (smooth) strict transform of $X_O$.

  $\bullet$ The projection
 $p_W:X\dra \P^2_W$ 
from the 4-plane $\T_{W,O}$  is a birational conic bundle with associated double \'etale cover   
   $\pi^\star:\widetilde \Gamma^\star_6\to\Gamma^\star_6$ and involution $\sigma^\star$.
   
   $\bullet$ $\Gamma_7=\Gamma_6\cup\Gamma_1\subset \Pi$ is the discriminant curve of the net of quadrics $\Pi$ that contain $X_O$, with associated double \'etale cover   
$\pi:\widetilde \Gamma_6\cup\Gamma_1^1\cup\Gamma_1^2\to \Gamma_6\cup\Gamma_1$  and involution $\sigma$.   We write $\{  p_1,\dots,  p_6\}=\Gamma_1 \cap  \Gamma_6$ 
 and $\{\tilde p_1,\dots,\tilde p_6\}=\Gamma_1^1\cap\widetilde \Gamma_6$.

 $\bullet$ $F_g(X)$ is the connected surface that parametrizes conics on $X$, with singular locus  the smooth connected curve $\widetilde\Gamma^\star_6$ of conics on $X$ passing through $O$,
 and
 $\nu:\widetilde F_g(X)\to   F_g(X)$ is  the normalization (\S\ref{cto}). The smooth surface $\widetilde F_g(X)$ carries an involution $\iota$; its minimal model is $\widetilde F_m(X)$.
 
 $\bullet$ $S^{\rm even}$ and $S^{\rm odd}$ are the special surfaces associated with $\pi$, with involution $\sigma$ (\S\ref{seso}). There is an isomorphism $  \rho:\widetilde F_m(X)\isomto   S^{\rm odd}$ (\S\ref{s117}).
 
 $\bullet$ $T\subset \P^2\times \P^2$ is a {\em Verra solid,} \ie, a smooth hypersurface of bidegree $(2,2)$.

 \section{The fourfolds $W$ and $W_O$}\label{sec3}\label{pow}
 
  \subsection{The fourfold $W$}\label{fw}
 As explained in \cite{dim}, \S{3}, the intersections
$$W=G(2,V_5)\cap \P_7 \subset \P(\wedge^2 V_5),
$$
whenever smooth and 4-dimensional, are all isomorphic under the action of $\PGL(V_5)$. They correspond dually to pencils $\P_7^\bot$ of skew-symmetric forms on $V_5$ which are all of maximal rank. The map that sends  a   form in the pencil to its kernel has image a smooth conic $c_U\subset    \P ( V_5 )$ that spans a $2$-plane $\P (U_3)$, where   $U_3\subset  V_5$ is the unique common maximal isotropic subspace to all forms in  the pencil   (see \S\ref{coow} for explicit equations).
  
  \subsection{Quadrics containing $W$}\label{iden} To any one-dimensional subspace $V_1\subset V_5$, one associates a Pl\"ucker quadric in $  |\cI_{G(2,V_5)}(2)|$ obtained as the pull-back of $G(2,V_5/V_1)$ by the rational map $\P(\wedge^2V_5)\dra \P(\wedge^2(V_5/V_1))$. This gives a linear map 
$$\gamma_G:\P(V_5) \lra |\cI_{G(2,V_5)}(2)|\isom \P^{49}$$
whose image consists of quadrics   of rank $6$. Since no such quadric contains $\P_7$ and $|\cI_W(2)|$ has dimension 4, we   obtain an isomorphism
\begin{equation}\label{iso}
\gamma_W:\P(V_5) \isomlra |\cI_W(2)|.
\end{equation}
The quadric   $\gamma_W([V_1])\subset \P_7$   has rank $6$ except if the vertex of $\gamma_G([V_1])$, which is the $3$-plane $\P(V_1\wedge V_5)$  contained in $G(2,V_5)$, meets $\P_7$ along a $2$-plane, which must be  contained in $W$. This happens if and only if $[V_1]\in c_U$ (\cite{dim}, \S 3.3).

Given a point $O\in W$, corresponding to a $2$-dimensional subspace $V_O\subset V_5$ , the quadrics that contain $W$ and are singular at $O$ correspond, via the isomorphism (\ref{iso}), to the projective line $\P(V_O)\subset \P(V_5)$.

\subsection{The $\P^2$-bundle  $\P(\cM_O)\to \P^2_O$}\label{pmo}

As in \cite{dim}, \S3.4, we define, for any hyperplane $V_4\subset V_5$, the 4-dimensional vector space $M_{V_4}=\wedge^2V_4\cap V_8$ (where $V_8\subset \wedge^2V_5$ is the vector space such that $\P_7=\P(V_8)$) and the quadric surface
  $$Q_{W,V_4}=G(2,V_4)\cap \P(M_{V_4})\subset W.
$$
Let $\P^2_O\subset\P(V_5^\vee)$ be the set  of hyperplanes $V_4\subset V_5$ that contain  $V_O$, and consider the rank-$3$ vector bundle $\cM_O\to \P^2_O$ whose fiber over $[V_4]$ is $M_{V_4}/\!\!\wedge^2\! V_O$ and the associated $\P^2$-bundle $\P(\cM_O)\to \P^2_O$.

  \subsection{The fourfold  $W_O$}\label{woo}
  Let $O$ be a point of $W$, let $V_O\subset V_5$ be the corresponding $2$-dimensional subspace,  and  let 
$p_O:\P_7\dra\P^6_O$ be the projection from $O$. We set
   $$W_O=p_O(W)\subset \P^6_O.$$
The group $\Aut(W)$ acts on $W$ with four orbits $O_1,\dots,O_4 $ indexed by their dimensions (\S\ref{coow}), so there are four different $W_O$. We will restrict ourselves to the (general) case $O\in O_4$, although similar studies can be made for the other orbits.

Since $O\in O_4$, the line $\P(V_O)\subset\P(V_5)$ does not meet the conic $c_U$ (\S\ref{coow}), hence all quadrics in the pencil  $\gamma_W(\P(V_O))$   are singular at $O$ and have rank $6$  (\S\ref{iden}). This pencil projects to a pencil of rank-6 quadrics in $\P^6_O$ whose base-locus contains  the fourfold $W_O$. Since $W_O$ has degree $4$, it is equal to this base-locus  and contains the $3$-plane $\P^3_W= p_O(\T_{W,O} )$. The locus of the  vertices of these quadrics is a rational normal cubic curve $C_O\subset \P^3_W$, which is the singular locus of $W_O$ and parametrizes lines in $W$ through $O$ (see \S\ref{coowo} for explicit computations).
 
 In fact, all pencils of rank-$6$ quadrics in $\P^6$ are isomorphic (\cite{hp}, Chapter XIII, \S11).
 In particular, all quadrics in the pencil have  a common
$3$-plane, and   the fourfold that they define in $\P^6$ is isomorphic to $W_O$.

\subsection{The $\P^2$-bundle $\widetilde W_O\to \P^2_W$}\label{p2w}

 Let $\P^2_W$   parametrize 5-planes in $\P_7$ that contain $\T_{W,O}$ (or, equivalently,  4-planes in $\P^6_O$ that contain $\P^3_W$). Let $\eps: \widetilde \P^6_O\to \P^6_O$ be the blow-up of $\P^3_W$, with $\widetilde \P^6_O\subset \P^6_O\times\P^2_W$, and let $\widetilde W_O\subset  \widetilde \P^6_O$ be the strict transform of $W_O$. 
 
 We will   prove in \S\ref{coowo}   that the projection  $\widetilde W_O\to \P^2_W$ 
  is a $\P^2$-bundle, and that
 $\widetilde W_O$ is smooth.  Furthermore, there is an isomorphism $\P^2_O\isom \P^2_W  $ such that  the induced  isomorphism $\P^2_O\times\P^6_O\isom \P^2_W  \times\P^6_O$, gives by restriction {\em an isomorphism 
between the $\P^2$-bundles $\P(\cM_O)\to \P^2_O$  and  $\widetilde W_O\to \P^2_W$} (see \S\ref{wtw}).  

Finally, the strict transform $\widetilde\P^3_W$ of $\P^3_W$ in $\widetilde W_O$ is 
the intersection of the exceptional divisor of $\eps$ with  $\widetilde W_O$; it has therefore dimension 3 everywhere. Since it contains the inverse image of the cubic curve $C_O\subset \P^3_W$, which is a surface (\S\ref{coowo}), it must be the blow-up of $ \P^3_W$ along $C_O $. The fibers of the projection 
$\widetilde\P^3_W\to \P^2_W$ are the bisecant lines to $C_O$.

\section{The threefolds $X$ and  $X_O$} \label{pox}

We consider here singular threefolds
$$X=G(2,V_5)\cap \P_7 \cap \Omega,
$$
where $\Omega$ is a quadric in $\P^9$, such that  {\em   $X$ has a unique singular point $O$, which is a node.} 

 \begin{lemm}[Logachev]
 The intersection $G(2,V)\cap \P_7 $ is smooth, hence isomorphic to $W$, and one may choose $\Omega$ to be a cone with vertex $O$.
 \end{lemm}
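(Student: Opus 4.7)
\noindent\textit{Plan.} Write $W':=G(2,V_5)\cap \P_7$; it has the expected dimension $4$. A point $[v\wedge w]\in W'$ is singular on $W'$ if and only if some nonzero skew form in the pencil $\P_7^\bot\subset \P(\wedge^2 V_5^\vee)$ has the plane $\langle v,w\rangle$ in its kernel.

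\smallskip

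\noindent\emph{Smoothness of $W'$.} Forms in $\P_7^\bot$ have even rank $\le 4$. If every form has rank $4$, its kernel is a line and no $2$-plane can be contained in a kernel, so $W'$ is smooth. Otherwise some $\omega_0\in \P_7^\bot$ has rank $2$, with $3$-dimensional kernel $K_0\subset V_5$, and then $\Sing W'$ contains $\P(\wedge^2 K_0\cap V_8)$, a positive-dimensional subvariety of $G(2,K_0)\cap \P_7$ (a second form of the pencil imposes at most one non-trivial linear condition on $\wedge^2 K_0\simeq \C^3$). By Bezout this subvariety either lies entirely in $\Omega$ (making $X=W'\cap \Omega$ singular along a curve) or meets $\Omega$ in at least one point distinct from $O$ (necessarily singular on $X$); both outcomes contradict the hypothesis that $X$ has a unique nodal singularity at $O$. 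Hence $W'$ is smooth and isomorphic to $W$.

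\smallskip

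\noindent\emph{Choice of $\Omega$.} Since $W$ is smooth at $O$ while $X=W\cap \Omega$ has a node there, $\Omega$ must be tangent to $W$ at $O$: its differential $d\Omega_O\in(\T_O\P_9)^\vee$ vanishes on $\T_{W,O}$, hence lies in the $5$-dimensional annihilator $N:=\T_{W,O}^\bot$. Adding any element of the ideal $I_W(2)_{\P_9}$ of quadrics in $\P_9$ containing $W$ to $\Omega$ leaves $X$ unchanged, so it suffices to show that the differential map $d\colon I_W(2)_{\P_9}\to N$, $Q\mapsto dQ_O$, is surjective. Transversality of $G(2,V_5)$ and $\P_7$ at $O$ yields $\T_{G(2,V_5),O}+\T_{\P_7,O}=\T_O\P_9$ and hence the direct sum decomposition $N=\T_{G,O}^\bot \oplus \T_{\P_7,O}^\bot$ of dimensions $3+2=5$. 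The $5$-dimensional Pl\"ucker subideal $I_{G(2,V_5)}(2)$ is sent by $d$ into $\T_{G,O}^\bot$, with kernel the Pl\"ucker quadrics singular at $O$; by \S\ref{iden} these correspond via $\gamma_G$ to the $\P^1=\P(V_O)\subset \P(V_5)$, so the kernel is $2$-dimensional and the image fills the $3$-dimensional $\T_{G,O}^\bot$. On $I_{\P_7}(2)$, a quadric of the form $\ell_1 L_1+\ell_2 L_2$ (where $\ell_1,\ell_2$ define $\P_7$) has differential $L_1(O)\ell_1+L_2(O)\ell_2$ at $O$, and these span $\T_{\P_7,O}^\bot=\Span(\ell_1,\ell_2)$ as $L_1,L_2$ vary. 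Therefore $d$ is surjective, and a suitable $\Omega'\in I_W(2)_{\P_9}$ makes $\Omega-\Omega'$ singular at $O$, i.e., a cone with vertex $O$ cutting out $X$ on $W$.

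\smallskip

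\noindent The main obstacle is the smoothness part: one must rule out the delicate possibility that the positive-dimensional $\Sing W'$ meets $\Omega$ only at $O$, and that the resulting singularity of $X$ at $O$ is nevertheless a node. This is handled by a tangent-cone computation at $O$ using the explicit shape of $\Sing W'$ dictated by the rank-$2$ form $\omega_0$. The cone-quadric step is then a clean dimension count on gradients at $O$, with the transverse decomposition of $I_W(2)_{\P_9}$ into Pl\"ucker and $\P_7$-contributions doing the work.
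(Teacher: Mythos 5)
Your treatment of the second assertion (replacing $\Omega$ by a cone with vertex $O$) is correct and is essentially the paper's own argument written out in coordinates: the paper shows that the linear map $\Omega'\mapsto \T_{\Omega',O}$ from $|\cI_W(2)|$ to the $\P^2$ of hyperplanes of $\T_{\P_7,O}$ containing $\T_{W,O}$ is surjective because its indeterminacy locus is only the line $\gamma_W(\P(V_O))$; your decomposition $N=\T_{G,O}^\bot\oplus\T_{\P_7,O}^\bot$ with the counts $5-2=3$ and $2$ is the same computation carried out upstairs in $\P_9$.

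The smoothness part, however, contains a genuine gap. Your dichotomy --- the positive-dimensional piece $\P(\wedge^2K_0\cap V_8)$ of $\Sing W'$ either lies in $\Omega$ or ``meets $\Omega$ in at least one point distinct from $O$'' --- is not a dichotomy: Bezout only guarantees that the intersection with the quadric is nonempty, and a line contained in $\Sing W'$ and tangent to $\Omega$ at $O$ meets $\Omega$ exactly in $\{O\}$. You flag this yourself in the final paragraph, but the ``tangent-cone computation at $O$'' that is supposed to dispose of it is never performed, and it is precisely the nontrivial point of the lemma. The paper's version of the dichotomy is the correct one: the intersection of $\Sing W'$ with $\Omega$ either contains two points (too many singularities of $X$) or consists of the single point $O$, in which case $O\in\Sing W'$, every line of $\Sing W'$ through $O$ lies in $\T_{\Omega,O}$, and one must then check that the resulting quadratic tangent cone of $X$ at $O$ is forced to be degenerate (rank $\le 3$), so that $O$ is not a node; the paper defers this to Logachev's Lemmas 3.5 and 5.7. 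Until that residual case is actually argued, your proof establishes smoothness of $W'$ only under the unjustified assumption that $\Sing W'\cap\Omega\ne\{O\}$.
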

 
\begin{proof}
 We follow \cite{lo}, Lemmas 3.5 and 5.7.   If the intersection $W' =G(2,V)\cap \P_7 $ is singular, the corresponding pencil $\P_7^\bot$ of skew-symmetric forms on $V_5$ contains a  form of rank $\le 2$, and one checks that the singular locus of $W'$ is
$$\Sing (W')=\bigcup_{\omega\in \P_7^\bot,\ \rank(\omega)\le 2} G(2,\Ker(\omega))\cap \P_7,
$$
hence is a union of linear spaces of dimension $\ge 1$. In particular, the intersection  with the quadric $\Omega$ either has at least two singular points or a singular point which is not a node. It follows that $W=G(2,V)\cap \P_7 $ is smooth.

Consider now the map
\begin{eqnarray*}
|\cI_W(2)|&\dra&\left\{ \begin{matrix}\hbox{Hyperplanes in $T_{\P_7,O}$}\\\hbox{containing $T_{W,O}$}\end{matrix}\right\}\isom \P^2\\
\Omega'&\longmapsto&T_{\Omega',O}.
\end{eqnarray*}
It is not defined exactly when $\Omega'$ is singular at $O$, which happens along the projective line $\gamma_W(\P(V_O))$ (\S\ref{iden}). Since it is nonconstant, it is therefore surjective. 

Assume that $\Omega$ is smooth at $O$. Since $W\cap \Omega$ is singular at $O$, we have $T_{W,O}\subset T_{\Omega_O}$, hence there exists a quadric $\Omega'\supset W$ smooth at $O$ such that 
$T_{\Omega',O} =T_{\Omega,O}$. Some linear combination of $\Omega$ and $\Omega'$ is then singular at $O$ and still cuts out $X$ on $W$.   \end{proof}

 Conversely, we will from now on consider   a {\em general} quadric $\Omega$ with vertex $O $ in the orbit $O_4$. The intersection   $X=W\cap \Omega$ is then  smooth except for one node at $O$ and the group  $\Pic(X)$ is   generated by the class of $\cO_X(1)$.\footnote{The lines from the two   rulings of the exceptional divisor of the blow-up of $O$ in $X$ are       numerically, but  not algebraically, equivalent (see \S\ref{extex}; the reader will check that the proof of this fact does not use the fact that $X$ is locally factorial!), hence  the local ring $\cO_{X,O}$ is factorial (\cite{mor}, (3.31)) and the Lefschetz Theorem still applies (\cite{gro}, Exp.~XII, cor.~3.6; the hypotheses $H^1(X,\cO_X(-k))=H^2(X,\cO_X(-k))=0$ for all $k>0$ follow  from Kodaira vanishing on $W$).}

 We keep the notation of \S\ref{woo} and set $\Omega_O=p_O(\Omega)$ and $X_O=p_O(X)$.  Let    $ \widetilde X\to X$ be the blow-up of $O$.
 The projection $p_O$ from $O$ induces a morphism
$$\widetilde X\stackrel{\phi}{\twoheadrightarrow} X_O\subset W_O\subset \P^6_O$$
which is an isomorphism except on the union of the lines in $X$ through $O$. There are six such lines,  corresponding  to the six points $s_1,\dots,s_6$ of $\Sing(W_O)\cap \Omega_O$, which are the six singular points of $X_O$. 

 The threefold  $X_O \subset \P^6_O$ is the complete intersection of three quadrics and conversely, given the intersection of $ W_O$ with a general smooth quadric $\Omega_O$, its inverse image under the birational map $W\dra W_O$ is a variety of type  $\cX_{10}$ with a node at $O$.

 \subsection{The conic bundle   $p_W:X\dra \P^2_W$ and the double cover $\pi^\star:\widetilde \Gamma^\star_6\to\Gamma^\star_6$}\label{pw}
Keeping the notation of \S\ref{p2w},   consider the
projection
$$p_W:X\dra \P^2_W$$
from the 4-plane $\T_{W,O}$. It is also induced on  $X_O\subset  \P^6_O $ by the projection from the  $3$-plane $\P^3_W= p_O(\T_{W,O} )$, hence is a well-defined morphism on the strict transform $\widetilde X_O$ of $X_O$ in the blow-up $\widetilde \P^6_O$ of $ \P^6_O $ along $\P^3_W$, where $\widetilde \P^6_O\subset   \P^6_O\times \P^2_W$.

\begin{prop}\label{verr}
The variety $\widetilde X_O$ is smooth  and   the projection  $\widetilde X_O\to \P^2_W$ is a conic bundle with discriminant   a smooth sextic $\Gamma^\star_6\subset \P^2_W$.
\end{prop}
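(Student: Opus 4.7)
The plan is to realize $\widetilde X_O$ as a divisor inside the $\P^2$-bundle $\widetilde W_O \to \P^2_W$, cut out by the strict transform of the quadric $\Omega_O$. Since $\widetilde W_O \to \P^2_W$ is a smooth $\P^2$-bundle by \S\ref{p2w}, and in each $\P^2$-fiber the strict transform of $\Omega_O$ restricts to a (possibly degenerate) conic, the projection $\widetilde X_O \to \P^2_W$ is a priori a family of conics; the conic bundle structure then follows from the equidimensionality of fibers, once smoothness of $\widetilde X_O$ is established.

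For smoothness of $\widetilde X_O$, the candidate singular points lie above (a) the six nodes $s_1,\dots,s_6$ of $X_O$, all of which sit in $\P^3_W$, and (b) fibers of $\widetilde X_O\to \P^2_W$ which are \emph{double lines}. For (a), recall that $\Sing(W_O)$ is the cubic curve $C_O\subset \P^3_W$, and that $s_i\in Q\cap C_O$, so $X_O$ inherits its singularity at $s_i$ from that of $W_O$. Since the blow-up $\eps$ is centered along $\P^3_W\supset C_O$, the strict transform $\widetilde W_O$ is already smooth along the preimage of $s_i$ (this is exactly the content of \S\ref{p2w}), so the same is true for the divisor $\widetilde X_O\subset \widetilde W_O$ once transversality of $\Omega_O$ to the exceptional divisor is verified in the coordinates of \S\ref{coowo}. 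Issue (b) is excluded by generality of $\Omega_O$: the quadratic forms on the fibers of $\cM_O$ form a section of $\Sym^2 \cM_O^\vee\otimes L$ over $\P^2_W$, and the locus of rank $\le 1$ forms is of codimension $3$ in the space of $3\times 3$ symmetric matrices, so a general section of a base-point-free system avoids it on a surface.

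For the discriminant, write $\widetilde W_O \cong \P(\cM_O)$ as in \S\ref{p2w}. The strict transform of $\Omega_O$ corresponds to a symmetric bundle map $\cM_O \to \cM_O^\vee \otimes L$ for a line bundle $L$ on $\P^2_W$, and $\Gamma^\star_6$ is the vanishing locus of its determinant, a section of $(\det\cM_O^\vee)^{\otimes 2} \otimes L^{\otimes 3}$. Using the explicit description of $\cM_O$ (\S\ref{pmo}) and the identification of the twist $L$ via the explicit coordinates of \S\ref{coowo}, one checks that the resulting line bundle is $\cO_{\P^2_W}(6)$, so $\Gamma^\star_6$ is a plane sextic. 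Its smoothness follows by a Bertini-type argument applied to the family of quadrics $\Omega_O$ with vertex $O$, provided one verifies that the induced map to the space of symmetric forms on $\cM_O$ is sufficiently non-degenerate.

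The main obstacle is the coordinate-level verification of item (a): one must check by hand, using the rank-$6$ structure of the pencil of quadrics defining $W_O$, that over each node $s_i$ the strict transform $\widetilde X_O$ is smooth. A secondary difficulty is the precise identification of the twist $L$, since the numerical class of $\Gamma^\star_6$ depends on it; both points are handled by the explicit model of \S\ref{coowo}.
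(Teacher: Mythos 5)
Your proposal follows essentially the same route as the paper: both realize $\widetilde X_O$ as a divisor in the smooth $\P^2$-bundle $\widetilde W_O\to\P^2_W$ of \S\ref{p2w}, deduce smoothness of $\widetilde X_O$ and of the discriminant from the generality of $\Omega_O$ (Bertini), and defer the degree-$6$ statement to an explicit calculation (the paper also offers Theorem \ref{nodver} as an alternative). Your case analysis of potential singularities over the $s_i$ and over double-line fibers is subsumed by the single application of Bertini to the base-point-free system $|\cO_{\P^6_O}(2)|$ pulled back to the smooth fourfold $\widetilde W_O$, which is all the paper invokes.
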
 

 We denote the 
associated double cover by $\pi^\star:\widetilde \Gamma^\star_6\to\Gamma^\star_6$.  It is \'etale by \cite{bbb}, prop.~1.5, and connected.\footnote{\label{brauer}Any nontrivial conic bundle over $\P^2_O$ defines a 
  nonzero element of the Brauer group of $\C(\P^2_O)$  which, since the Brauer group  of $\P^2_O$ is trivial, must have some nontrivial residue: the double cover of at least one component of the discriminant curve must be irreducible.}
  
  Also,   fibers of $\widetilde X_O\to \P^2_W$ project to   conics in $X_O$. Since the strict transform of $\P^3_W$ in $\widetilde W_O$ is a $\P^1$-bundle (\S\ref{coowo}), they  meet  the quadric $Q$ in two points.

\begin{proof}We saw in \S\ref{p2w}
  that   $\widetilde W_O\to \P^2_W$  is a $\P^2$-bundle. The smoothness of  $\widetilde X_O$  then follows from the Bertini theorem, which also implies that the discriminant curve is smooth. The fact that it is a sextic follows either from direct calculations or from Theorem \ref{nodver}.
\end{proof}

With the notation of \S\ref{pmo},  consider now the rank-$3$ vector bundle $\cM_O\to \P^2_O$ whose fiber over $[V_4]$ is $M_{V_4}/\!\!\wedge^2\! V_O$. 
 Inside $\P(\cM_O)$, the   quadric $\Omega_O$ defines a   conic bundle $Q_{\Omega,O}\to \P^2_O$, and $Q_{\Omega,O}$ is smooth by the Bertini theorem. 
 
 \begin{prop}\label{verr2}
The double cover associated with the conic bundle $Q_{\Omega,O}\to \P^2_O$ is isomorphic to 
$\pi^\star:\widetilde \Gamma^\star_6\to\Gamma^\star_6$. 
\end{prop}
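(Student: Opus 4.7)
The plan is to identify the conic bundle $Q_{\Omega,O}\to\P^2_O$ with the conic bundle $\widetilde X_O\to\P^2_W$ via the isomorphism of $\P^2$-bundles $\P(\cM_O)\isom\widetilde W_O$ asserted in \S\ref{p2w}, and then invoke Proposition~\ref{verr} to produce the double cover $\pi^\star$.

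First, I would spell out the identification $\P(\cM_O)\isom \widetilde W_O$ fiber-by-fiber. Each fiber $\P(M_{V_4}/\wedge^2 V_O)$ of $\P(\cM_O)\to\P^2_O$ is naturally a $\P^2$ inside $\P(V_8/\wedge^2 V_O)=\P^6_O$, namely the projection from $O=[\wedge^2 V_O]$ of the $3$-plane $\P(M_{V_4})\supset Q_{W,V_4}\ni O$. Under the bijection $\P^2_O\isom\P^2_W$, this $\P^2$ is precisely the fiber over the corresponding point of the projection $\widetilde W_O\to\P^2_W$: after blowing up $\P^3_W$, the linear projection from $\P^3_W$ becomes a morphism on $\widetilde W_O$ whose fibers are exactly these $\P^2$'s, sitting inside the $5$-planes $p_O(\P(M_{V_4}))\supset\P^3_W$.

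Second, I would use the fact that $\Omega\subset\P_7$ is a cone with vertex $O$, so $\Omega=p_O^{-1}(\Omega_O)$ for the quadric $\Omega_O\subset\P^6_O$. On each fiber $\P(M_{V_4}/\wedge^2 V_O)\subset\P^6_O$, the restriction of $\Omega_O$ cuts out the same conic on the two sides of the bundle isomorphism. By definition this restriction is the fiber of $Q_{\Omega,O}\subset\P(\cM_O)$; on the other side, it is the fiber of $\widetilde X_O\subset\widetilde W_O$, because the general quadric $\Omega_O$ does not contain the blow-up center $\P^3_W$, so the strict transform of $X_O=W_O\cap\Omega_O$ meets each $\P^2$-fiber of $\widetilde W_O\to\P^2_W$ in the plane conic $\Omega_O\cap\P(M_{V_4}/\wedge^2 V_O)$. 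Running this fiberwise, the two conic bundles are identified; in particular they share the same discriminant and the same associated \'etale double cover, which by Proposition~\ref{verr} is $\pi^\star:\widetilde\Gamma^\star_6\to\Gamma^\star_6$.

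The main obstacle is the first step: the precise identification of $\P^2$-bundles $\P(\cM_O)\isom\widetilde W_O$ over the matched bases $\P^2_O\isom\P^2_W$ is the content of \S\ref{coowo} and ultimately rests on explicit coordinate computations for $W$, its projection $W_O$, and the blow-up along $\P^3_W$. Once that bundle isomorphism is granted, the compatibility with the quadric $\Omega_O$ is essentially formal, since on both sides the conic bundle structure comes from restricting the same quadric $\Omega_O\subset\P^6_O$ to the $\P^2$-fibers.
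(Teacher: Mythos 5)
Your proof is correct and follows the paper's own argument: both rely on the $\P^2$-bundle isomorphism $\P(\cM_O)\isom\widetilde W_O$ over $\P^2_O\isom\P^2_W$ from \S\ref{p2w} (established by the coordinate computations of \S\ref{wtw}), together with the observation that both conic bundles are cut out by the same quadric $\Omega_O\subset\P^6_O$, so the discriminant data and hence the double covers agree. Your fiberwise unpacking is just a more explicit rendering of the same two-line argument.
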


\begin{proof}We saw in \S\ref{p2w}
  that the  $\P^2$-bundles  $\widetilde W_O\to \P^2_W$  and
 $\P(\cM_O)\to \P^2_O$ are isomorphic and the isomorphism restricts to an isomorphism between the conic bundles   $\widetilde X_O\to \P^2_W$ and $Q_{\Omega,O}\to \P^2_O$, because they are   pull-backs of the same quadric $\Omega_O\subset \P^6_O$.  It follows that the associated double covers  are     isomorphic.
\end{proof}

   \subsection{The double cover $\widetilde \Gamma_6\to \Gamma_6$}\label{g6}

 Let $\P$ be the net of quadrics in $\P^6_O$ which contain $X_O$. The discriminant curve $\Gamma_7\subset\P$ (which parametrizes singular quadrics in $\P$) is the union of the line $\Gamma_1$  of quadrics that contain $W_O$, and  a   sextic $\Gamma_6$. The pencil $\Gamma_1$ meets $\Gamma_6$ transversely at six points $p_1,\dots,p_6$  corresponding to quadrics whose vertices are  the six  singular points $s_1,\dots,s_6$ of $X_O$  (\cite{bbb}, \S5.6.2). 

All quadrics in $\P$ have rank at least $6$ (because rank-$5$ quadrics in $\P^6_O$ have codimension $3$). 
 In particular,   there is a   double \'etale cover 
 $$\pi:\widetilde \Gamma_6\cup\Gamma_1^1\cup\Gamma_1^2\to \Gamma_6\cup\Gamma_1$$ 
 corresponding to the choice of a ($2$-dimensional) family of $3$-planes contained in a quadric of rank $6$ in $\P $. The $3$-plane $ \P^3_W$ is contained in all the quadrics in the pencil $\Gamma_1 $ and defines the component $\Gamma_1^1$  and points $\{\tilde p_1,\dots,\tilde p_6\}=\Gamma_1^1\cap\widetilde \Gamma_6$.
 The   curve $\widetilde \Gamma_6$ is smooth and  connected  (footnote \ref{brauer}).

 \subsection{Line transformations}\label{lt}
 Let $\ell$ be a general line contained in $X$. As in the smooth case (\cite{dim}, \S6.2), one can perform a {\em line transformation} of $X$ along $\ell$:  
 \begin{equation*} 
\xymatrix@C=35pt@M=7pt@R=20pt
{\widetilde X_\ell\ar[d]^\eps\ar@{-->}[r]^\chi&\widetilde X'_\ell\ar[d]^{\eps'}\\
X\ar@{-->}[r]^{\psi_\ell}&X_\ell,
}\end{equation*}
where $\eps$ is the   blow-up of  $\ell$, with exceptional divisor $E$, the birational map $\chi$ is an $(-E)$-flop, $\eps'$ is the blow-down onto a line $\ell'\subset X_\ell$ of a divisor $E'\equiv  -K_{\widetilde X'_\ell}-\chi(E)$, and 
$X_\ell$ is another nodal threefold of type $\cX_{10}$. The map $\psi_\ell$ is associated with a linear subsystem of $|\cI_\ell^3(2)|$. Its inverse $\psi_\ell^{-1}$ is the line transformation of $X_\ell$ along the line $\ell'$. Moreover,  $\psi_ \ell $ is defined at the node $O$ of $X$ and  $\psi_ \ell ^{-1}$ is defined at the node $O'$  of $X_ \ell $. 

 As explained in \cite{ip} \S4.1--4.3, this is a general process. One can also perform it with the image $\ell_O$  of $\ell$ in  $ X_O$ and obtain a diagram (\cite{ip}, Theorem 4.3.3.(ii))
\begin{equation*}\label{dia2}
\xymatrix@C=35pt@M=7pt@R=20pt
{\widetilde X_{O,\ell_O}\ar[d]\ar@{-->}[r]&\widetilde X'_{O,\ell_O}\ar[d]^{p'_\ell}\\
X_O\ar@{-->}[r]^{p_\ell}&\P^2,
}\end{equation*}
where $p'_\ell$ is a conic bundle and $p_\ell$ is again associated with a linear subsystem of $|\cI_{\ell_O}^3(2)|$. The  birational conic bundle $p_\ell$    can be described geometrically as follows (\cite{bbb}, \S1.4.4 or \S6.4.2): a general point $x\in X_O$ is mapped to the unique quadric
in $\P=\P^2$ containing the 2-plane $\langle \ell_O, x\rangle$. Its discriminant curve is $\Gamma_7=\Gamma_1\cup \Gamma_6$.

\begin{lemm}\label{lemp}
Let $X_\ell\dra X_{\ell,O'}\subset \P^6_{O'}$ be the projection from the node $O'$ of $X_\ell$ and let $\ell'_{O'}$ be the image of the line $\ell'\subset  X_\ell$.
There is a commutative diagram
\begin{equation*}
\xymatrix@C=35pt@M=7pt@R=20pt
{X_O\ar@{-->}[d]^{p_W}\ar@{-->}[r]^{\psi_{\ell,O}}&X_{\ell,O'}\ar@{-->}[d]^{p_{\ell'}}\\
\P^2_W\ar@{=}[r]&\P'.
}\end{equation*}
\end{lemm}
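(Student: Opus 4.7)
The plan is to prove the lemma by identifying both maps $p_W$ and $p_{\ell'}\circ\psi_{\ell,O}$ as the same birational conic bundle $X_O\dra\P^2$, by matching their generic fibers under the birational identification $\psi_{\ell,O}:X_O\dra X_{\ell,O'}$.

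First, I would describe the fibers on each side. By \S\ref{p2w} and Proposition \ref{verr}, a generic fiber of $p_W$ is the conic $C_{\Pi_4}$ residual to the quadric $Q=\P^3_W\cap\Omega_O$ in $X_O\cap\Pi_4$, where $\Pi_4\supset\P^3_W$ is a $4$-plane in $\P^6_O$; these conics are parametrized by $\P^2_W$, the space of such $4$-planes. On the other hand, by the geometric description of line-projection conic bundles in \S\ref{lt}, applied to $X_{\ell,O'}$ with line $\ell'_{O'}$ in place of $\ell_O$, a generic fiber of $p_{\ell'}$ is a conic in $X_{\ell,O'}$ meeting $\ell'_{O'}$ in one point (obtained as the residual of $\ell'_{O'}$ in the intersection of $X_{\ell,O'}$ with a $2$-plane through $\ell'_{O'}$); these are parametrized by the net $\P'$ of quadrics defining $X_{\ell,O'}$.

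Next, I would track a generic fiber $C_{\Pi_4}$ through $\psi_{\ell,O}$. Lifting $C_{\Pi_4}$ to a conic $\widetilde C$ in $X$ via $\widetilde X\to X_O$ (where $\widetilde X\to X$ is the blow-up of $O$), I would apply $\psi_\ell$ and use the general theory of line transformations (\cite{ip}, \S4.1--4.3, Theorem 4.3.3): since $\psi_\ell$ is associated with a subsystem of $|\cI_\ell^3(2)|$, the degree and incidence of $\psi_\ell(\widetilde C)$ with $\ell'$ and with $O'$ on $X_\ell$ are controlled by $\widetilde C\cdot\ell$ (generically zero) and by the behavior of $\widetilde C$ at $O$ (it meets the exceptional divisor of the blow-up of $O$ in $\widetilde X$). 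Projecting from $O'$ then yields, in $X_{\ell,O'}$, a conic meeting $\ell'_{O'}$ in one point---precisely a fiber of $p_{\ell'}$. The bijection between $4$-planes $\Pi_4\supset\P^3_W$ and the corresponding quadrics in $\P'$ produces the sought isomorphism $\P^2_W\isom\P'$ that makes the diagram commute.

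The main obstacle will be this second step: one must manage simultaneously the degrees and multiplicities arising from the blow-up of $\ell$, the $(-E)$-flop $\chi$, and the two projections from the nodes $O$ and $O'$, in order to confirm that the image of a generic $p_W$-fiber indeed lands in the conic family defining $p_{\ell'}$. A possibly cleaner alternative is to compare the two maps at the level of linear systems on $X$: $p_W$ is defined by the subsystem of $|H|$ consisting of hyperplane sections containing $\T_{W,O}$, while $(p_{\ell'}\circ\psi_{\ell,O})^*\cO_{\P'}(1)$ pulls back to a subsystem of $|2H-3\ell|$ on $X$ obtained from $|H_{X_\ell}-\ell'|$ via $\psi_\ell^{-1}$; matching these two subsystems, via a direct dimension count and identification of a generic member, would yield the commutativity of the diagram.
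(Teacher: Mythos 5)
Your overall strategy --- push a general fiber of $p_W$ through $\psi_{\ell,O}$ and show that $p_{\ell'}$ is constant on its image --- is exactly the paper's, and your description of the $p_W$-fibers (residual conics to $Q$ in $X_O\cap \Pi_4$ for $\Pi_4\supset\P^3_W$) is correct. The genuine problem is on the other side of the diagram: you have misidentified the conic bundle $p_{\ell'}$. By \S\ref{lt}, $p_{\ell'}$ is associated with a linear subsystem of $|\cI_{\ell'_{O'}}^3(2)|$, and its fiber over a general $[\Omega'_p]\in\P'$ is the locus of $y$ such that the $2$-plane $\langle \ell'_{O'},y\rangle$ is contained in $\Omega'_p$ --- not the residual of $\ell'_{O'}$ in a $2$-plane section. (A $2$-plane $P\supset\ell'_{O'}$ meets the complete intersection $X_{\ell,O'}$ of three quadrics in $\ell'_{O'}$ plus at most a point, never in $\ell'_{O'}$ plus a residual conic, so the curve you describe does not exist.) In particular the general $p_{\ell'}$-fiber, as a curve in $\P^6_{O'}$, is not a conic meeting $\ell'_{O'}$ once; it only becomes a conic after the blow-up/flop resolving $p_{\ell'}$, or equivalently it is a curve of higher degree mapping birationally onto a plane conic under the \emph{other} projection.

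Because of this, the endpoint you set for the degree-tracking step is false and cannot be ``confirmed'': the computation (which is precisely the paper's proof) shows that the strict transform $F$ of a general $p_W$-fiber is a quartic through $O$ twice, disjoint from $\ell$, so $\psi_\ell(F)$ is an octic through $O'$ twice meeting $\ell'$ in $E'\cdot\chi(\eps^{-1}(F))=(-K_{\widetilde X_\ell}-E)\cdot\eps^{-1}(F)=4$ points, and its image in $X_{\ell,O'}$ is a rational \emph{sextic} meeting $\ell'_{O'}$ in $4$ points. The lemma then follows not because this curve is a $p_{\ell'}$-fiber in your sense, but because $|\cI_{\ell'_{O'}}^3(2)|$ restricts to it with degree $2\cdot 6-3\cdot 4=0$, i.e.\ $p_{\ell'}$ \emph{contracts} it; constancy of $p_{\ell'}\circ\psi_{\ell,O}$ on $p_W$-fibers then forces the factorization through $\P^2_W$. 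Your alternative linear-system route inherits the same error: the relevant system on $X_{\ell}$ is of type $|2H_{X_\ell}-3\ell'|$, not $|H_{X_\ell}-\ell'|$, and only with the correct classes does the bookkeeping close up (schematically $2(2H-3E_\ell)-3(H-2E_\ell)=H$, matching the hyperplane system defining $p_W$). So the idea is right but the decisive numerical step, which you yourself flag as the main obstacle, is aimed at the wrong target and would fail as stated.
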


\begin{proof}
A general fiber   of $p_W$ is a conic which meets $Q$ in two points (\S\ref{pw}). Its strict transform $F$ on $X$ is a quartic curve that passes twice through the node $O$ and does not meet $\ell$, hence its image   $\psi_\ell(F)\subset X_\ell$ has degree 8 and passes twice through the node $O'$ of $X_\ell$. Moreover, 
\begin{eqnarray*}
 E'\cdot \chi(\eps^{-1}(F))  &=& ( -K_{\widetilde X'_\ell}-\chi(E))\cdot \chi(\eps^{-1}(F))\\
 &  =&( -K_{\widetilde X_\ell}-E) \cdot \eps^{-1}(F)\\
 &=& \eps^*\cO_X(1)(-2E)  \cdot\eps^{-1}(F)\\
 &=&\deg(F)=4.
 \end{eqnarray*}
The image of $F$ in $X_\ell$ is therefore an octic that passes twice through the node $O'$ and meets $\ell'$ in 4 points. 
Its image in $X'_{\ell,O'}$ is a rational sextic  that meets  $\ell'_{O'}$ in 4   points. Since the map  $p_{\ell'} $ is associated with a linear subsystem of $|\cI_{\ell'_{O'}}^3(2)|$, it contracts this sextic. In other words, images of general fibers of $p_W$ by $\psi_{\ell,O} $ are contracted by $p_{\ell'} $, which proves the lemma.\end{proof}

 \subsection{The Verra solid associated with $X$}\label{veso}
 A {\em Verra solid} (\cite{ver}) is a smooth hypersurface of bidegree $(2,2)$ in $\P^2\times \P^2$.
 
 \begin{theo}\label{nodver}
A general nodal Fano threefold of type $\cX_{10}$ is birational to a general Verra solid. 
\end{theo}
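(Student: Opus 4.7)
The plan is to construct the birational equivalence via the two conic bundle structures on $X_O$: the ``double projection'' $p_W : X_O \dra \P^2_W$ from $\P^3_W$ (Proposition \ref{verr}), and the conic bundle $p_\ell : X_O \dra \P$ from the net of quadrics through $X_O$ associated to a general line $\ell \subset X$ (\S\ref{lt}). Consider the rational map
\[
\Phi := (p_\ell, p_W) : X_O \dra \P \times \P^2_W \isom \P^2 \times \P^2 .
\]
I propose to show that $\Phi$ is birational onto a hypersurface $T$ of bidegree $(2,2)$, smooth for general $X$.

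\emph{Step 1 (Birationality of $\Phi$).} The two conic bundles are genuinely distinct, as their discriminants ($\Gamma_6^\star$ versus $\Gamma_7 = \Gamma_1 \cup \Gamma_6$) do not match, so their generic fibers are distinct smooth conics and $\Phi$ is generically finite. For generic injectivity, pick a general point $x \in X_O$ and consider the conics $C_W := p_W^{-1}(p_W(x))$ and $C_\ell := p_\ell^{-1}(p_\ell(x))$ through $x$. Each is a smooth plane conic in $\P^6_O$ and spans a $2$-plane $P_W$, $P_\ell$. By construction $P_\ell$ contains the line $\ell_O$, while $P_W$ meets $\P^3_W$ along the chord of $C_W$ joining the two points of $C_W \cap Q$. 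For a general line $\ell$, the image $\ell_O$ avoids $\P^3_W$, so these $2$-planes are in sufficiently general position in $\P^6_O$ that they meet only at the shared point $x$; consequently $C_W \cap C_\ell = \{x\}$, and $\Phi$ has degree one.

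\emph{Step 2 (Image is a $(2,2)$-hypersurface).} Since $\dim X_O = 3$ and $\Phi$ is generically finite, $T := \overline{\Phi(X_O)}$ is an irreducible $3$-dimensional subvariety of the $4$-fold $\P^2 \times \P^2$, hence a hypersurface. The two projections $\pi_1, \pi_2 : T \to \P^2$ satisfy $\pi_1 \circ \Phi = p_\ell$ and $\pi_2 \circ \Phi = p_W$, so their generic fibers are the birational images of the smooth conic fibers of $p_\ell$ and $p_W$, respectively. A hypersurface in $\P^2 \times \P^2$ defined by a polynomial of bidegree $(a,b)$ has fibers of degree $b$ under the first projection and degree $a$ under the second; since both generic fibers are conics, we obtain $a = b = 2$.

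\emph{Step 3 (Smoothness for general $X$).} For general $X$, the smooth sextics $\Gamma_6^\star$ and $\Gamma_6$ appear as the discriminants of the two conic bundle projections of $T$, and $T$ is smooth at all points lying above their smooth loci. A Bertini-type argument (alternatively, a dimension count showing that the assignment $X \mapsto T$ is dominant onto the moduli of Verra solids, the positive-dimensional fibers accounting for the choice of $\ell$) then yields that $T$ is a general, and in particular smooth, Verra solid.

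\emph{Main obstacle.} The principal difficulty lies in Step 1, specifically in controlling the intersection $C_W \cap C_\ell$ and showing it reduces to the single point $x$. The $2$-plane argument is clean in outline but must carefully exclude the loci where $\ell_O$ happens to meet $\P^3_W$, where $C_W$ or $C_\ell$ passes through one of the nodes $s_1,\dots,s_6$ of $X_O$, or where the relevant linear spans degenerate. A useful safety net is Lemma \ref{lemp}: the line transformation $\psi_{\ell,O}$ identifies $p_W$ on $X_O$ with $p_{\ell'}$ on $X_{\ell, O'}$, so the roles of $p_\ell$ and $p_W$ can be swapped symmetrically, which both confirms the birational symmetry expected from a Verra solid and offers an alternative verification of birationality if the direct geometric argument becomes delicate.
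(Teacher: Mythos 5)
Your overall architecture (the product map $(p_W,p_\ell)$, birationality via the restriction of one projection to the fibers of the other, bidegree read off from the images of the fibers) is the same as the paper's, but there is a genuine gap at the crucial point, and it stems from your choice of $\ell$. You take a \emph{general} line $\ell\subset X$, whereas the paper's proof hinges on choosing $\ell$ to be a line contained in the quadric surface $Q=\P^3_W\cap\Omega_O\subset X_O$ (so $\langle\ell\rangle\subset\P^3_W$), avoiding the six nodes of $X_O$ --- this is exactly the ``suitable choice'' announced after the theorem statement. The choice matters because your Step 2 assertion that ``both generic fibers are conics'' --- i.e.\ that $p_\ell$ maps a general fiber $C_W$ of $p_W$ onto a conic in $\P$ --- is the whole content of the bidegree computation, and it is not a formal consequence of $C_W$ being a conic in $\P^6_O$. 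The map $p_\ell$ is given by the $2\times 2$ minors of the $2\times 3$ matrix $\bigl(B_i(u,x),B_i(v,x)\bigr)$ (with $\ell_O=\langle u,v\rangle$ and $B_i$ the bilinear forms of the net), hence by quadrics; restricted to the conic $C_W$ these are degree-$4$ forms on $\P^1$, and for a general $\ell$ a general $C_W$ avoids the indeterminacy locus (the curve $\bigcup_{w\in\ell_O}T_wX_O\cap X_O$), so $\deg p_\ell(C_W)\cdot\deg(p_\ell|_{C_W})=4$. Thus either the image has bidegree $4$ in that factor (not a Verra solid) or $p_\ell|_{C_W}$ is $2{:}1$ and $\Phi$ is not birational; in either case your argument breaks. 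What saves the day in the paper is that for $\ell\subset Q$ the whole surface $Q$ lies in the base locus of $p_\ell$ (every plane $\langle\ell,x\rangle$ with $x\in Q$ sits inside $\P^3_W$, hence inside the entire pencil $\Gamma_1$), and since $C_W$ meets $Q$ in two points the degree drops from $4$ to $2$; in the paper's coordinates this is visible as $p_\ell$ becoming linear on the fibers of $p_W$.

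Two smaller points. First, your Step 1 ``general position'' argument for $C_W\cap C_\ell=\{x\}$ is not a proof: the two $2$-planes are not independently general (both pass through $x$ and are constrained by the geometry of $X_O$), and in fact, as noted above, for a general $\ell$ the conclusion may simply be false ($\Phi$ could be $2{:}1$). The paper replaces this by the verifiable statement that $p_W$ restricted to a general fiber of $p_\ell$ is birational onto a plane conic (checked explicitly: the fiber of $p_\ell$ over $[\Omega_O]$ is cut out by $\lambda_2(x')=\lambda_4(x')=q(x')=0$ and $p_W$ factors through the $x'$-space), which yields degree one for $\Phi$ by exactly the fiber-counting you intend. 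Second, your appeal to Lemma \ref{lemp} as a ``safety net'' does not repair Step 1: that lemma identifies $p_W$ on $X_O$ with $p_{\ell'}$ on $X_{\ell,O'}$ but gives no independent handle on the degree of $(p_W,p_\ell)$.
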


More precisely, let $X$ be a general nodal Fano threefold of type $\cX_{10}$.  We show that for a suitable choice of $\ell\subset X_O$, the two birational conic bundle structures:
\begin{itemize}
\item  
 $ p_W  : X \dashrightarrow \P_W^2 $, with discriminant curve $\Gamma^\star_6$ (\S\ref{pw}), and 
 \item  
 $ p_\ell  : X \dashrightarrow  \P $, with   discriminant curve $\Gamma_6\cup \Gamma_1$ (\S\ref{lt}), 
 \end{itemize}
 induce a birational isomorphism 
$$\psi_\ell:=(p_W,p_\ell) : X \dashrightarrow T,  $$
where $T\subset \P_W^2\times\P$ is a general Verra solid. In particular, the sextics $\Gamma_6$ and 
$\Gamma^\star_6$ are general.

\begin{proof}[Proof of the theorem]
Recall that $X_O=W_O\cap \Omega_O$ contains the smooth quadric surface $Q=\P^3_W\cap \Omega_O$. Instead of choosing a general line as in \S\ref{lt}, we choose a line   $\ell$ contained in $Q$, not passing through any of the six singular points of $X$.

In suitable coordinates,   $W_O\subset \P^6_O$ is  the intersection of  the quadrics 
\begin{eqnarray*}
\Omega_1(x) &= &x_0x_1+x_2x_3+x_4x_5 \\  
\Omega_2(x) &= &x_1x_2+x_3x_4+x_5x_6,
\end{eqnarray*}
and  $\P^3_W=\P(\langle e_0,e_2,e_4,e_6\rangle)$ (\S\ref{coowo}). 
We may   assume   $\ell=\P(\langle e_2,e_4\rangle)$. 
  
The quadric $\Omega_O$ contains $\ell$, hence its 
equation is of the form
$$\Omega_O(x)=x_2\lambda_2(x')+x_4\lambda_4(x')+q(x'),$$
where  $\lambda_2$ and $\lambda_4 $   are linear and $q$ quadratic in $x'=(x_0,x_1,x_3,x_5,x_6)$. 

The   projection 
$p_W:X_O\dra\P^2_W$  sends $x $ to $(x_1,x_3,x_5)$. 
The map $p_\ell:X_O\dra \P=\langle \Omega_1,\Omega_2,\Omega_O\rangle$ sends  a general point $x\in X_O$ to the unique quadric
in $\P$ containing the 2-plane $\langle \ell, x\rangle$ (\S\ref{lt}).  We obtain
$$p_\ell(x)=(x_1\lambda_4(x')-x_3\lambda_2(x'), x_3\lambda_4(x')-x_5\lambda_2(x'), x_3^2-x_1x_5).$$
A general (conic) fiber of $p_W$  is mapped by $p_\ell$   onto a conic in $\P$ because $p_\ell$  becomes   linear once restricted to a fiber of $p_W$.  

Similarly, let us consider the fiber of $p_\ell$ at the general point  $[\Omega_O]\in\P$. It is the
set of points $x\in X_O$ such that the $2$-plane $\langle \ell, x\rangle$ is contained in $\Omega_O$.
This means   $\lambda_2(x')= \lambda_4(x')=0$, and consequently $q(x')=0$, so that $x'$ describes 
a plane conic in its parameter space  $\P(\langle e_0,e_1,e_3,e_5,e_6\rangle)$. 
Since the   projection to $\P^2_W$ factors through this 4-plane, a general
fiber of $p_\ell$ is mapped by $p_W$ birationally onto a conic in $\P^2_W$.  

Since the restriction of $p_W$ (resp.~$p_\ell$) to a general fiber of $p_\ell$ (resp.~$p_W$)
is birational onto its image, the product map
$$\psi_\ell=(p_W,p_\ell) : X_O\dashrightarrow \P_W^2\times\P$$
 is birational onto its image $T\subset  \P_W^2\times\P$. 
Let $(d,e)$ be the bidegree of this hypersurface: $d$ (resp.~$e$) is the degree of the image by $p_W$ 
(resp.~$p_\ell$) of a general fiber of  $p_\ell$ (resp.~$p_W$). Since   these plane curves 
are conics, we have $d=e=2$, and $T$ is a Verra solid.

Finally, the fact that $T$ is general  
follows from a dimension count. 
\end{proof}

\begin{rema} Let  $\rho_1:T\to  \P_W^2$ and $\rho_2:T\to \P$ be the two projections. {\em The surface $\psi_\ell(Q)\subset T$ is isomorphic to the blow-up of $Q$ at the six singular points $s_1,\dots,s_6$ of $X_O$. It is also equal to $\rho_2^{-1}(\Gamma_1)$.}

For $x$   in $Q\moins \ell$, the only quadrics in $\P$ that contain 
  the 2-plane $\langle \ell, x\rangle$ are in the pencil $\Gamma_1$. This implies $p_\ell(Q)\subset \Gamma_1$ and 
  $\psi_\ell(Q)\subset \rho_2^{-1}(\Gamma_1)$. A dimension count shows that given the Verra solid $T\subset  \P_W^2\times\P$, the line $\Gamma_1\subset \P$ is general. It follows that     $\rho_2^{-1}(\Gamma_1)$ is a smooth del Pezzo surface of degree 2 which is equal to $\psi_\ell(Q)$; its anticanonical (finite) map is the double cover  $\rho_{1Q}:\psi_\ell(Q)\to \P_W^2$. 

With the notation of \S\ref{pw},  it follows from the comments at the end of \S\ref{p2w} that the inverse image $\widetilde Q$ of $Q $ in $\widetilde X_O$  is isomorphic to the blow-up of $Q$ at the six singular points of $X_O$. 
We have a commutative diagram
$$\xymatrix
{
\widetilde Q\ar@{-->}[rr]^{\psi_Q}
\ar[dr]_{p_W}&&\psi_\ell(Q)\ar[dl]^{\rho_{1Q}}\\
&\ \P_W^2.
}
$$
Since $\rho_{1Q}$ is finite, $\psi_Q$ must be a morphism; since $\widetilde Q$ and $\psi_\ell(Q)$ are both del Pezzo surfaces of degree 2, it is an isomorphism.
\end{rema}

\begin{rema}\label{newr}
Let us analyze more closely the conic bundle structure  $ p_\ell  : X \dashrightarrow  \P $ for our choice of $\ell\subset Q$.
 Following the classical construction of \cite{bbb}, 1.4.4, we set  
$$X_\ell=\{ (P,\Omega_p)\in G(2,\P^6_O)\times \P\mid \ell\subset P\subset \Omega_p\}$$
and consider the birational map
\begin{eqnarray*}
\phi_\ell:X&\dra &X_\ell\\
x&\longmapsto& (\langle \ell,x\rangle,p_\ell(x)).
\end{eqnarray*}
Away from $\Gamma_1$, the second projection $q_\ell:X_\ell\to \P$ is a conic bundle with discriminant curve $ \Gamma_6$, whereas $q_\ell^{-1}(\Gamma_1)$ is the union of two components:
$$ Q_1=\{  (P,\Omega_p) \in G(2,\P^6_O)\times \Gamma_1\mid \ell\subset P\subset \P^3_W\} \isom \P^1\times\Gamma_1, $$
and the closure of
$$ \{  (P,\Omega_p) \in G(2,\P^6_O)\times \Gamma_1\mid  P\cap \P^3_W=\ell,\ P\subset \Omega_p \}.$$
They correspond respectively to the two components $\Gamma_1^1$ and $\Gamma_1^2$ of $\pi^{-1}(\Gamma_1)$ (see \S\ref{g6}). We have $\phi_\ell(Q)=Q_1$ and   lines $\ell^-\subset Q$ that meet  $\ell$ map to sections  of $Q_1\to\Gamma_1$. Consider the diagram
$$\xymatrix@C=50pt@M=7pt@R=-10pt
{
Q_1&Q\ar[l]_-{\sim}&\psi_\ell(Q)\ar[l]_\eps \\
\cap&\cap&\cap\\
X_\ell\ar[ddddddddr]_{q_\ell}&X\ar@{-->}[l]_-{\phi_\ell}\ar@{-->}[r]^{\psi_\ell}\ar[dddddddd]_{p_\ell}&T\ar[ddddddddl]^{\rho_2}\\
\\\\\\\\\\\\\\
&\ \P.
}
$$
The map $\rho_{2Q}:\phi_\ell(Q)\to \Gamma_1$ has six reducible fibers, above the points $p_1,\dots,p_6$  of $\Gamma_1\cap \Gamma_6$, and each contains one exceptional divisor  $E_1,\dots,E_6$ of $\eps$. Since lines $\ell^+\subset Q$ from the same ruling as   $\ell$ map to fibers of $\rho_{2Q}$, the other components must be  the strict transforms  $\ell^+_1,\dots,\ell^+_6$ of these lines  passing through $s_1,\dots,s_6$.

As   mentioned above, a general line $\ell^-\subset Q$ from the other ruling maps to a section of $\rho_{2Q}$ that does not meet  $E_1,\dots,E_6$, hence must meet $\ell^+_1,\dots,\ell^+_6$; moreover, these components of reducible fibers of the conic bundle $\rho_2:T\to\P$ correspond to the points of $\Gamma_1^1\cap \widetilde\Gamma_6$, which we denoted by $\tilde p_i$ in \S\ref{g6}. We also denote by $\ell^-_1,\dots,\ell^-_6$ 
the strict transforms  of those lines   passing through $s_1,\dots,s_6$. The line $\ell^-_i$ meets $\ell^+_j$ if and only if $i\ne j$, and meets $E_j$ if and only if $i=j$. It   maps by both projections $\rho_1$ and $\rho_2$ to a line. Finally, $\ell^\pm_i+E_i$ is rationally equivalent to $\ell^\pm$.
\end{rema}

 \section{The variety of conics contained in $X$}

 \subsection{The surfaces $F_g(X)$ and $F(X)$}\label{fgf}
 
 We follow \cite{lo}, \S5, but  with the notation of \cite{dim}, \S5. In particular, $F_g(X)$ is the variety of conics\footnote{As remarked in \cite{dim}, \S3.1, any nonreduced conic contained in $W$ is contained in a 2-plane contained in $W$. Since the family of these 2-planes has dimension 1 and none of them contain $O$, and nonreduced conics have codimension 3, all conics contained in a general $X$ are reduced.}
  contained in $X$, and
  $$F(X)=\{(c,[V_4]) \in F_g(X)\times \P(V_5^\vee) \mid  c\subset G(2,V_4)\}.
$$
The projection $F(X)\to F_g(X)$ is an isomorphism except over the one point corresponding to the only $\rho$-conic $c_X$ contained in $X$ (\cite{dim}, \S5.1). 

We define as in \cite{dim}, \S5.2 an involution $\iota$ on $F(X)$ as follows.
  For any hyperplane $V_4\subset V_5$,  define quadric surfaces
  $$Q_{W,V_4}=G(2,V_4)\cap \P(M_{V_4})\quad{\rm and}\quad Q_{\Omega,V_4}= \Omega\cap \P(M_{V_4}).
  $$
   If $(c,[V_4]) \in F(X)$, the intersection  
   $$ X\cap \P(M_{V_4})=Q_{W,V_4}\cap Q_{\Omega,V_4},
$$
has dimension 1.\footnote{This is because, as we saw in \S\ref{pox}, $X$ is locally factorial and  $\Pic(X)$ is generated by $\cO_X(1)$, hence the degree of any surface contained in $X$ is divisible by 10.} Since $c$ is reduced, one checks by direct calculation that as a 1-cycle, it is the sum of 
 $c$ and another (reduced) conic contained in $X$, which we denote by $\iota(c)$. One checks as in \cite{lo}, Lemma 3.7, that since $X$ is general, $\iota(c)\ne c$ for all $c$, and some quadric   in the pencil $\langle Q_{W,V_4}, Q_{\Omega,V_4}\rangle$ is a pair of  distinct  planes. This defines a fixed-point-free involution $\iota$ on $F(X)$.

\subsection{Conics in $X$ passing through $O$}\label{cto}  Since $O\in O_4$, any such conic $c$   is a $\tau$-conic, hence is   contained in a unique $G(2,V_4)$, and $[V_4]\in \P^2_O$. The quadric  $Q_{\Omega,V_4}$ is then a cone with vertex $O$.
 
 If   $Q_{\Omega,V_4}$ is reducible, \ie, if $[V_4]\in \Gamma^\star_6$, the conics  $c$ and $\iota(c)$  meet at $O$ and another point. The two points  of $\pi^{\star -1}([V_4])$ correspond to the two 2-planes contained in   $Q_{\Omega,V_4}$, hence to $[c]$ and $[\iota(c)]$. By Proposition \ref{verr2}, these conics $c$  are parametrized by the 
  curve $\widetilde\Gamma^\star_6$.

  If   $Q_{\Omega,V_4}$ is irreducible,  $c\cup \iota(c)$ is the intersection of the cone $Q_{\Omega,V_4}$ with a pair of planes, and  $c$ is the union of two (among the six) lines in $X$ through $O$.
  


  \begin{theo}[\cite{lo}, \S5]\label{fg}
  The variety $F_g(X)$ is an irreducible surface. Its singular locus is the smooth connected curve $\widetilde\Gamma^\star_6$ of conics on $X$ passing through $O$ described above, and its  normalization
  $\widetilde F_g(X)$  is smooth. 
    \end{theo}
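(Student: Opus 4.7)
The plan is to follow Logachev's approach (\cite{lo}, \S5), adapting to the nodal case the Hilbert-scheme analysis used for smooth prime Fano threefolds of degree~$10$ in \cite{dim}, \S5. I would analyze separately conics on $X$ disjoint from the node $O$ and conics through $O$, then assemble the global picture.

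For the open locus of conics $c \subset X$ with $O \notin c$, the analysis takes place entirely inside the smooth locus of $X$, so the standard normal bundle computation gives $h^0(c, N_{c/X}) = 2$ and $h^1(c, N_{c/X}) = 0$, showing that $F_g(X)$ is smooth of dimension~$2$ there. For conics through $O$: by \S\ref{cto}, a smooth conic $c$ through $O$ is a $\tau$-conic contained in a unique $G(2, V_4)$ with $[V_4] \in \P^2_O$, and is one component of the pair-of-planes $Q_{W,V_4} \cap Q_{\Omega,V_4}$, where $Q_{\Omega,V_4}$ is a reducible cone with vertex $O$; this forces $[V_4] \in \Gamma^\star_6$. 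By Proposition \ref{verr2}, the two planes in $Q_{\Omega,V_4}$ correspond to the two points of $(\pi^\star)^{-1}([V_4])$, yielding a bijection between smooth conics on $X$ through $O$ and points of $\widetilde\Gamma^\star_6$. This curve is smooth and connected by Proposition \ref{verr}, identifying the claimed singular locus set-theoretically.

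To analyze the local structure of $F_g(X)$ at such a conic $c$, I would pass to the blow-up $\nu: \widetilde X \to X$ (on which $c$ has a smooth strict transform $\tilde c$ meeting the exceptional quadric surface $E$) and compute the deformation space using the exact sequence relating $N_{c/X}$ to $N_{\tilde c/\widetilde X}$ and the restriction data along $\tilde c \cap E$. The outcome I expect is that $\dim T_{F_g(X),[c]} = 3$, with two smooth local analytic branches: one tangent to $\widetilde\Gamma^\star_6$ (deformations preserving the incidence with $O$, parametrized by the conic bundle of \S\ref{pw}) and one transverse to it (smoothings of $c$ moving off $O$). Normalization then separates these branches and yields a smooth $\widetilde F_g(X)$. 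Irreducibility of $F_g(X)$ would follow by a specialization argument: having established that $F_g(X)$ is equidimensional of dimension~$2$ with a smooth open dense part, one deduces irreducibility from the irreducibility of the Fano surface of conics on a nearby smooth prime Fano threefold of degree~$10$ (\cite{dim}, \S5), together with upper-semicontinuity of fiber dimension.

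The main obstacle is the local analysis at a conic through $O$: showing that the tangent space has dimension \emph{exactly} $3$ (not higher), identifying the two local branches, and verifying that normalization indeed produces a smooth (not merely less singular) surface. This requires carefully teasing apart how the node of $X$ interacts with the deformation theory of $c$, a computation most cleanly carried out on the blow-up $\widetilde X$ via the restriction of $N_{\tilde c/\widetilde X}$ to the two points of $\tilde c \cap E$, each of which provides an extra infinitesimal direction corresponding to one of the two rulings of the exceptional quadric.
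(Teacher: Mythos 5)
The paper does not actually prove this statement: it is quoted verbatim from Logachev (\cite{lo}, \S5), and the smoothness and irreducibility of the (minimal model of the) normalization are ultimately obtained there via the explicit isomorphism $\rho:\widetilde F_m(X)\isomto S^{\rm odd}$ with Beauville's special surface, which is known to be smooth and connected, rather than by a self-contained deformation-theoretic argument. Your outline is therefore a genuinely different route, and in principle a legitimate one, but as written it has two real gaps.

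First, the heart of the theorem --- the local structure of $F_g(X)$ along the curve of conics through $O$ --- is not proved but only announced (``the outcome I expect is that $\dim T_{F_g(X),[c]}=3$, with two smooth local analytic branches''). That computation on $\widetilde X$, including the identification of the two branches with $\widetilde\Gamma^\star_{6,+}$ and $\widetilde\Gamma^\star_{6,-}$ and the verification that normalization separates them into \emph{smooth} sheets, is precisely what needs to be done; without it you have restated the theorem, not proved it. You also leave untreated the conics through $O$ that are pairs of the six lines through $O$ (the case $Q_{\Omega,V_4}$ irreducible in \S\ref{cto}): these are not on $\widetilde\Gamma^\star_6$, so for the singular locus to be exactly $\widetilde\Gamma^\star_6$ you must show $F_g(X)$ is smooth at those $15$ points; likewise the $\rho$-conic $c_X$ and the curve of $\sigma$-conics require separate attention even away from $O$, as the ``standard normal bundle computation'' is not uniform over all reduced conics. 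Second, the irreducibility argument is not valid as stated: upper semicontinuity of fiber dimension does not prevent the special fiber of the relative Hilbert scheme over a family degenerating $X'$ to $X$ from acquiring extra two-dimensional components not in the flat limit of $F_g(X')$. One needs either a direct dimension bound on every component of the space of conics in the nodal $X$ together with a connectedness argument, or (as in Logachev) the identification of $\widetilde F_m(X)$ with the irreducible surface $S^{\rm odd}$ via the map $\rho_g$ of \S\ref{s117}.
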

    
    Moreover, the curve of $\sigma$-conics (which is disjoint from $\widetilde\Gamma^\star_6$) is exceptional on $F_g(X)$, and its inverse image
     on     $\widetilde F_g(X)$ can be contracted  to a smooth surface $ \widetilde F_m(X)$ (as in the smooth case; see \cite{dim}, \S5.3). The involution $\iota$ induces  an involution on $ \widetilde F_m(X)$.

\medskip
Logachev also proves that the inverse image of $\widetilde\Gamma^\star_6$ in $ \widetilde F_g(X)$  has two connected components $\widetilde\Gamma^\star_{6,+}$ and $ \widetilde\Gamma^\star_{6,-}$, which map isomorphically to $\widetilde\Gamma^\star_6$ by the normalization $\nu:\widetilde F_g(X)\to   F_g(X)$. They can be described as follows.  

Let $c\subset X$ be a   conic passing through $O$ corresponding to a point of $ \widetilde \Gamma_6^\star $. The curve   $p_O(c)$  is a line $\ell\subset X_O$ that meets, but is not contained in, the smooth quadric surface $Q=\P^3_W\cap \Omega_O$. The two points of $\nu^{-1}([c])$ correspond to the conics $[\ell\cup \ell^+]\in \widetilde\Gamma^\star_{6,+}$ and $[\ell\cup \ell^-]\in \widetilde\Gamma^\star_{6,-}$ contained in $X_O$, where $\ell^+$ and $\ell^-$ are the two lines in $Q$ passing through its point of intersection with $\ell$.

 In particular, $\widetilde\Gamma^\star_{6,\pm}$ carries an involution $\sigma^\star_\pm$ induced by the involution $\sigma^\star$ of $\widetilde\Gamma^\star_6$.
On the other hand, the involution $\iota$ of $\widetilde F_g(X)$ maps $\widetilde\Gamma^\star_{6,+}$ isomorphically onto $\widetilde\Gamma^\star_{6,-}$. The identification  $\widetilde\Gamma^\star_{6,+}\isomto \widetilde\Gamma^\star_{6,-}$ induced by the normalization $\nu $ is $\iota\circ \sigma^\star_+=\sigma^\star_-\circ\iota$.

  \subsection{The special surfaces $S^{\rm even}$ and $S^{\rm odd}$}\label{seso}

There is an embedding $\P^\vee\hookrightarrow \Gamma_6^{(6)}$ that sends a line in $\P$ to its intersection with $\Gamma_6$. Its inverse image in $\widetilde \Gamma_6^{(6)}$ is a surface  $S$  with two connected   components $S^{\rm even}$ and $S^{\rm odd}$,  each endowed with an involution $\sigma$. They are defined by
 \begin{equation*}\label{evenodd}
S^{\rm even} = \{[\widetilde D]\in   S\mid h^0(\widetilde\Gamma_6, \pi^*\cO_\P (1) (\widetilde D)) \hbox{\rm\ even}\}
\end{equation*}
and similarly for $ S^{\rm odd}$ (\cite{spe}, \S2, cor.). By 
 \cite{spe}, prop.~3,   they are smooth because $\Gamma_6$, being general,  has no tritangent lines.

In particular, the point  $\tilde p_1+\dots+\tilde p_6$ of $\widetilde \Gamma_6^{(6)} $ defined at the end of \S\ref{g6} is in $  S$. {\em We will show in Proposition \ref{MS} that it is in $S^{\rm odd}$.}

  \subsection{The isomorphism  $\widetilde F_m(X)\isomto   S^{\rm odd}$}\label{s117} 
Let $c$ be a conic on $X$ such that  $O\notin\langle c\rangle$. The projection $p_O(c)$ is a conic in $X_O$, and the set of quadrics in $\P$ that contain $\langle p_O(c)\rangle$ is a line $L_c\subset \P$.  For each point $p$ of $L_c\cap \Gamma_6$, {\em if the vertex $v_p$ of $\Omega_p$ is not in the $2$-plane} $\langle p_O(c)\rangle$, the $3$-plane $\langle p_O(c),v_p\rangle$ defines a point $\tilde p\in\widetilde \Gamma_6$ above $p$. This defines a point $\rho_g([c])$ in $  S $.

One checks by direct calculation (\S\ref{coowo}) that the $2$-plane   $\Pi=\langle c_X \rangle $ is disjoint from $\T_{W,O}$. It follows that $c_X$ satisfies the conditions above, hence $\rho_g([c_X])$ is well-defined. Moreover, the line $L_{c_X}$ is $\Gamma_1$, and for each $p_i\in \Gamma_1\cap \Gamma_6$,  the $3$-planes $ \P^3_W$ and $\langle p_O(c_X),v_{p_i}\rangle$ meet only at $v_{p_i}$, hence belong to different families; it follows that we have $\rho_g([c_X])=\sigma (\tilde p_1+\dots+ \tilde p_6)$, {\em which is in the surface $S^{\rm odd}$}.
We have therefore defined a rational map 
$$\rho_g:  F_g(X)\dra S^{\rm odd}
.$$
Logachev then proves (\cite{lo}, \S5) that $\rho_g$ induces an isomorphism 
  \begin{equation}\label{rho}\rho:\widetilde F_m(X)\isomto   S^{\rm odd}.
    \end{equation}
 This isomorphism commutes with the involutions $\iota$ and $\sigma$: since   the $3$-planes $\langle p_O(c),v_p\rangle$ and $\langle p_O(\iota(c)),v_p\rangle$ meet in codimension $1$, they belong to different families, hence 
  $\rho_g\circ \iota=\sigma\circ \rho_g$.

Let us explain how $\rho$ is defined on the normalization $\widetilde F_g(X)$.
If $c\subset X$ is a   conic passing through $O$ corresponding to a point of $ \widetilde \Gamma_6^\star $  and  $\ell=p_O(c)$, the two points of $\nu^{-1}([c])$ correspond to the conics $[\ell\cup \ell^+]$ and $[\ell\cup \ell^-]$, where $\ell^+$ and $\ell^-$ are the two lines in $Q$ passing through its point of intersection with $\ell$.

 The images by $\rho$ of these two points  are defined as usual:  the set of quadrics in $\P$ that contain $\langle \ell\cup \ell^\pm\rangle$ is a line $L^\pm\subset \P$; for each point $p$ of $L^\pm\cap \Gamma_6$,   if the vertex $v_p$ of $\Omega_p$ is not in the $2$-plane  $\langle \ell\cup \ell^\pm\rangle$, the $3$-plane $\langle \ell\cup \ell^\pm,v_p\rangle$ defines a point $\tilde p\in\widetilde \Gamma_6$ above $p$.   
 

\subsection{Conic transformations}\label{ct}
Let $c$ be a general conic contained in $X$. As in the smooth case (\cite{dim}, \S6.2),  one can perform a {\em conic transformation} of $X$ along $c$:  
\begin{equation*} 
\xymatrix@C=35pt@M=7pt@R=20pt
{\widetilde X_c\ar[d]^\eps\ar@{-->}[r]&\widetilde X'_c\ar[d]^{\eps'}\\
X\ar@{-->}[r]^{\psi_c}&X_c,
}\end{equation*}
where $\eps$ is the   blow-up of  $c$, the birational map $\chi$ is a flop, $\eps'$ is the blow-down onto a conic $c'\subset X_c$ of a divisor,   and 
$X_c$ is another nodal threefold of type $\cX_{10}$. Moreover,  $\psi_c$ is defined at the node $O$ of $X$ and  $\psi_c^{-1}$ is defined at the node  of $X_c$.

As in the smooth case, there is an isomorphism 
$$\phi_c: F_m(X)\isomto F_m(X_c)$$ which commutes with the involutions $\iota$ and $\phi_c(\iota([c]))=[c_{X_c}]$ (\cite{dim}, Proposition 6.2).

 \section{Reconstructing the threefold $X$}
 
We keep the notation of \S\ref{g6}:  in the net $\P$ of quadrics in $\P^6_O$ which contain $X_O$, the discriminant curve $\Gamma_7=\Gamma_6\cup\Gamma_1$ parametrizes singular quadrics, and the   double \'etale cover $\pi:\widetilde \Gamma_7\to \Gamma_7$ corresponds to the choice of a  family of $3$-planes contained in a quadric of rank $6$ in $\P^6_O$.


\begin{theo}\label{tcM}
We have the following properties.
\begin{itemize}
\item[\rm a)]The   morphism 
$ v:\Gamma_7\to \P^6_O$ that sends $p\in \Gamma_7$ to the unique singular point of the corresponding singular quadric $\Omega_p\subset\P^6_O$   is an embedding and
 $v^*:H^0(\P^6_O,\cO_{\P^6_O}(1))\to H^0(\Gamma_7,v^*\cO_{\P^6_O}(1))$ is an isomorphism.
Furthermore, the invertible sheaf $M_X$  on $\Gamma_7$ defined   by $M_X(1)=v^*\cO_{\P^6_O}(1)$  is a  theta-characteristic
 and   $H^0(\Gamma_7,M_X)=0$.
\item[\rm b)]
The double \'etale cover $\pi:\widetilde \Gamma_7\to \Gamma_7$ is defined by the point $\eta=M_X(-2)  $, of order $2$  in $J(\Gamma_7)$.
\item[\rm c)]
The variety $X_O\subset \P^6_O$ is determined up to projective isomorphism by the pair $(\Gamma_7,M_X)$.\footnote{\label{more}More precisely, given an isomorphism $f:\Gamma_7\isomto  \Gamma'_7$ such that $f^*M_{X'}= M_X$, there exists a projective isomorphism $X_O \isomto   X'_O$ which induces $f$.}
\end{itemize}
\end{theo}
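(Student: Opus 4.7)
The plan is to deduce Theorem \ref{tcM} from the classical Dixon--Catanese--Beauville theory of symmetric determinantal representations of plane curves, applied to the net $\P$ of quadrics containing $X_O$.

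\smallskip

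\noindent\textbf{Setup.} Since rank-$5$ quadrics in $\P^6_O$ have codimension $3$ while $\dim\P=2$, every $\Omega_p$ for $p\in\Gamma_7$ has rank exactly $6$, so its vertex is a single point and $v$ is a well-defined morphism. I would view the net as a symmetric sheaf morphism $A:V_7\otimes\cO_\P(-2)\to V_7^\vee\otimes\cO_\P(-1)$, where $V_7$ is the $7$-dimensional ambient vector space of $\P^6_O$; its degeneracy locus is $\{\det A=0\}=\Gamma_7$ (of degree $7$), and since $X_O$ has codimension $3$, $A$ is injective as a sheaf map. Setting $M_X=\mathrm{coker}(A)$ yields a symmetric resolution
$$0\to V_7\otimes\cO_\P(-2)\stackrel{A}{\to}V_7^\vee\otimes\cO_\P(-1)\to M_X\to 0.$$

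\smallskip

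\noindent\textbf{Part (a).} On the smooth locus of $\Gamma_7$ one has $M_X|_p\cong (\ker A(p))^\vee=(\C\cdot v(p))^\vee$, which globally reads $M_X(1)\cong v^*\cO_{\P^6_O}(1)$. Dualizing the resolution via $\cE xt^1(-,\omega_\P)$ with $\omega_\P=\cO_\P(-3)$, self-duality of $A$ gives $M_X\cong\omega_{\Gamma_7}\otimes M_X^{-1}$, hence $M_X^{\otimes 2}\cong\omega_{\Gamma_7}$. Taking $H^0$ of the resolution gives $H^0(\Gamma_7,M_X)=0$ because both flanking cohomology groups, $H^0(\cO_\P(-1)^{\oplus 7})$ and $H^1(\cO_\P(-2)^{\oplus 7})$, vanish. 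Twisting the resolution by $\cO_\P(1)$ and taking $H^0$ then produces the isomorphism $v^*:H^0(\P^6_O,\cO(1))=V_7^\vee\isomto H^0(\Gamma_7,M_X(1))$. That $v$ is a closed immersion follows because the linear system $H^0(M_X(1))=V_7^\vee$ separates points and tangents on each component by genericity, and at each node $p_i=\Gamma_1\cap\Gamma_6$ of $\Gamma_7$ the two branches map to independent tangent directions at the ordinary node $s_i$ of $X_O$, whose two tangent cones are the two rulings of $Q\cap\Omega_{p_i}$.

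\smallskip

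\noindent\textbf{Part (b).} The fiber of $\pi$ over a smooth $p\in\Gamma_7$ parametrizes the two rulings of $3$-planes in the rank-$6$ quadric $\Omega_p$; by Beauville's quadric-bundle formalism (cf.\ Beauville, \emph{Vari\'et\'es de Prym et jacobiennes interm\'ediaires}), this double cover is classified by a twist of the cokernel line bundle, namely $\eta=M_X(-2)$. It is indeed $2$-torsion since $(M_X(-2))^{\otimes 2}=\omega_{\Gamma_7}\otimes\cO_\P(-4)|_{\Gamma_7}=\cO_{\Gamma_7}$ by the adjunction formula $\omega_{\Gamma_7}=\cO_\P(4)|_{\Gamma_7}$ for a plane septic.

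\smallskip

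\noindent\textbf{Part (c) and main obstacle.} Conversely, given $(\Gamma_7,M_X)$ with $H^0(M_X)=0$, Catanese's theorem on symmetric determinantal representations produces, uniquely up to $\GL(V_7)$-conjugation, a symmetric resolution of $M_X$ of exactly the above form, whose middle map is a $7\times 7$ symmetric matrix of linear forms on $\P$. The associated net of quadrics in $\P(V_7)\cong\P^6_O$ has base locus a variety of the type of $X_O$, and any isomorphism $f:\Gamma_7\isomto\Gamma_7'$ with $f^*M_{X'}\cong M_X$ lifts to an isomorphism of resolutions and hence to a projective isomorphism $X_O\isomto X_O'$ inducing $f$, as in footnote \ref{more}. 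The main obstacle will be the careful bookkeeping of twists in (a) so that $M_X$ emerges on the nose as $\omega_{\Gamma_7}^{1/2}$ (rather than some twist of a theta-characteristic), together with the verification that $v$ is injective with independent tangents at the six nodes of $\Gamma_7$; once these twists are pinned down, the Dixon--Catanese dictionary runs automatically and Beauville's formalism delivers (b) and (c).
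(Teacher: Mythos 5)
Your proposal follows essentially the same route as the paper, which simply cites Beauville's lemmes 6.8, 6.12, 6.14 and prop.~6.19 of \emph{Vari\'et\'es de Prym et jacobiennes interm\'ediaires} together with the Dixon--Catanese--Beauville determinantal theory (the paper's only added remark is that the comatrix of a rank-6 symmetric matrix $A$ has entries $(v_iv_j)$, which is your cokernel-fiber computation in disguise). You unpack those citations via the symmetric resolution and $\cE xt$-duality, which is exactly the mechanism behind them; the only spot argued more loosely than one would like is the closed-immersion property of $v$ at the six nodes of $\Gamma_7$, but the paper offers no more detail there either.
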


\begin{proof} Item a) is proved in the same way as \cite{bbb}, lemme 6.8 and lemme 6.12.(ii); item b)  as \cite{bbb},  lemme 6.14; and item c)   as \cite{bbb}, prop.~6.19. The isomorphism  $v^*\cO_{\P^6_O}(2)\isom \cO_{\Gamma_7} (6)$ can also be seen directly by noting that a quadric in the net $\P$ is given by a $7\times7$ symmetric matrix $A$ of linear forms on $\P$, and that when this matrix has rank 6, the comatrix of $A$ (whose entries are sextics) is of the type $(v_iv_j)_{1\le i,j\le 7}$, where $v_1,\dots,v_7$ are the homogeneous coordinates of the vertex.
 \end{proof}
 

  \begin{rema}\label{conv}
 Conversely, given a reduced  septic $\Gamma_7\subset\P$  and an invertible sheaf $M$ on $\Gamma_7$ that satisfies $H^0(\Gamma_7,M)=0$ and $M^2\isom \cO_{\Gamma_7} (4)$, there is a resolution 
\begin{equation}\label{resol}
0\lra \cO_{\P}(-2)^{\oplus 7}\stackrel{A}{\lra}  \cO_{\P}(-1)^{\oplus 7}\lra M\lra 0
\end{equation}
of $M$ viewed as a sheaf on $\P$, where $A$ is a $7\times7$ symmetric matrix of linear forms, everywhere of rank $\ge 6$,
 with determinant an equation of $\Gamma_7$. Indeed, this follows from work of Catanese (\cite{cata}, Remark 2.29 and  \cite{cc}, Theorem 2.3) and Beauville (\cite{full}, Corollary 2.4; the hypothesis that $\Gamma_7$ be integral is not needed in the proof because $M$ is invertible) who generalized
 an old result of  Dixon's (\cite{dix})
 for smooth plane curves. 
 
Given any smooth sextic $ \Gamma_6\subset\P$ that meets a line $\Gamma_1$ transversely, there exists on the septic $\Gamma_7=\Gamma_6\cup\Gamma_1 $  an 
invertible theta-characteristic   $M$ such that $H^0(\Gamma_7,M)=0$ (\cite{catas}, Theorem 7 or Proposition 13). So we  obtain by (\ref{resol}) a net of quadrics in $\P^6_O$ whose base-locus is the intersection of $W_O$ with a smooth quadric. By taking its inverse image under the birational map $W\dra W_O$, we obtain a variety of type $\cX_{10}$ with, in general, a single node at $O$.
 \end{rema}
 
 The following proposition
 describes how the theta-characteristic $M_X$ is related to our previous constructions.
 
\begin{prop}\label{MS}
Let   $\Gamma_6$ be a general plane sextic, let $\pi:\widetilde \Gamma_6\to \Gamma_6$ be a connected  double \'etale cover, with associated involution $\sigma$ and line bundle  $\eta$  of order $2$ on $\Gamma_6$. There is a commutative diagram
$$\xymatrix
{
 {\left\{ 
\begin{matrix}
\hbox{Invertible theta-characteristics}\\
\hbox{$M$ on the union of $\Gamma_6$ and a}\\
\hbox{transverse line such that $M\vert_{\Gamma_6}\isom \eta(2)$}
\end{matrix}
\right\}}
\ar[rr]^-\theta\ar[dr]&&  S/\sigma\ar[dl]\\
&\P^\vee,
}
$$
where $\theta$ is   an open embedding   and maps {\em even} (resp.~{\em odd}) theta-char\-act\-eristics to $S^{\rm odd}/\sigma$
(resp.~$S^{\rm even}/\sigma$).

Furthermore, if $M_X$ is the (even) theta-characteristic associated with  a general nodal $X$, we have
$$\theta(M_X)= \rho([c_X]).$$
\end{prop}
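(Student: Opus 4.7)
Since $M|_{\Gamma_6}\cong\eta(2)$ forces $M|_{\Gamma_1}\cong\cO_{\Gamma_1}(2)$, after choosing isomorphisms $\phi_6$ and $\phi_1$ (each unique up to a scalar) the sheaf $M$ is determined by six gluing scalars $c_i\in\C^\ast$ at the nodes $p_i$, and $M^{\otimes 2}\cong\cO_{\Gamma_7}(4)$ forces $c_i\in\{\pm 1\}$. A local trivialization of $\eta$ at $p_i$ amounts to a choice of preimage $\tilde p_i\in\pi^{-1}(p_i)\subset\widetilde\Gamma_6$, so the sign $c_i$ selects a lift $\tilde p_i^{c_i}$; the divisor $\widetilde D(M):=\sum_i\tilde p_i^{c_i}\in\widetilde\Gamma_6^{(6)}$ pushes forward to $\Gamma_1\cdot\Gamma_6$, hence lies in $S$. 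The residual simultaneous sign flip of $\phi_6$ and $\phi_1$ acts as the involution $\sigma$, so $\theta(M):=[\widetilde D(M)]\in S/\sigma$ is well defined and the diagram in the proposition commutes by construction. Above the open locus of lines $\Gamma_1\in\P^\vee$ meeting $\Gamma_6$ transversely at six distinct points, both the source fiber (the $32$ theta-characteristics with the prescribed restrictions) and $(S/\sigma)|_{\Gamma_1}$ are in canonical bijection with $\{\pm 1\}^6/\{\pm 1\}$, under which $\theta$ becomes the identity; both sides being smooth étale covers of the same degree over this open locus, $\theta$ is an open embedding by Zariski's Main Theorem.

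\textbf{Parity swap.} The normalization short exact sequence for $\Gamma_7=\Gamma_6\cup\Gamma_1$ gives $h^0(\Gamma_7,M)=\dim\ker\alpha$, where
$$\alpha\colon H^0(\Gamma_6,\eta(2))\oplus H^0(\Gamma_1,\cO(2))\to\C^6,\quad (s,t)\mapsto \bigl(s(p_i)-c_i\, t(p_i)\bigr)_i.$$
Using the identification $\pi^\ast\cO_\P(1)(\widetilde D)\cong\pi^\ast\cO_\P(2)(-\sigma\widetilde D)$ and the Galois decomposition $H^0(\widetilde\Gamma_6,\pi^\ast\cO(2))=H^0(\Gamma_6,\cO(2))\oplus H^0(\Gamma_6,\eta(2))$, one similarly has $h^0(\pi^\ast\cO_\P(1)(\widetilde D))=\dim\ker\beta$ for
$$\beta\colon H^0(\Gamma_6,\cO(2))\oplus H^0(\Gamma_6,\eta(2))\to\C^6,\quad (u,v)\mapsto \bigl(u(p_i)-c_i\, v(p_i)\bigr)_i.$$
The key observation is that the two evaluation maps $H^0(\Gamma_6,\cO(2))\to\C^6$ and $H^0(\Gamma_1,\cO(2))\to\C^6$ have the same $3$-dimensional image in $\C^6$, because both factor through the surjection $H^0(\P,\cO(2))\twoheadrightarrow H^0(\Gamma_1,\cO(2))$. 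Hence $\operatorname{im}\alpha$ and $\operatorname{im}\beta$ are related by the invertible matrix $\operatorname{diag}(c_i)$ and share a common rank $r$, yielding $h^0(\pi^\ast\cO_\P(1)(\widetilde D))-h^0(\Gamma_7,M)=(9-r)-(6-r)=3$; so $M$ is even if and only if $\theta(M)\in S^{\rm odd}/\sigma$.

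\textbf{The equation $\theta(M_X)=\rho([c_X])$.} Both sides lie in $S^{\rm odd}/\sigma$ by the previous step together with $h^0(\Gamma_7,M_X)=0$ (Theorem \ref{tcM}.a). By Theorem \ref{tcM}.(b) and Remark \ref{conv}, the pair $(M_X,\pi)$ arises from one and the same net of quadrics: $M_X$ from the resolution (\ref{resol}) determined by the vertex map $v$, and $\pi$ from the two families of maximal isotropic $3$-planes in the rank-$6$ quadrics. The distinguished $3$-plane $\P^3_W$, common to every quadric of the pencil $\Gamma_1$, provides a global section $\Gamma_1\to\Gamma_1^1\subset\widetilde\Gamma_7$ that singles out the lifts $\tilde p_i\in\Gamma_1^1\cap\widetilde\Gamma_6$ of \S\ref{g6}. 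The main obstacle of the proof is the compatibility check: one has to show that the gluing signs $c_i(M_X)$ extracted from (\ref{resol}) with respect to the trivializations of $\eta|_{p_i}$ induced by $\tilde p_i$ are all equal. This reduces to a local analysis at each $p_i$ comparing the kernel of $A(p_i)$ with the family of $3$-planes containing $\P^3_W$, and uses crucially that $\Gamma_1\cong\P^1$ is simply connected so the choice propagates coherently along $\Gamma_1$. Granted this uniformity, $\widetilde D(M_X)=\tilde p_1+\cdots+\tilde p_6$ modulo $\sigma$, which equals $\rho([c_X])=[\sigma(\tilde p_1+\cdots+\tilde p_6)]$ in $S/\sigma$.
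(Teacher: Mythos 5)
Your construction of $\theta$ and the open-embedding argument are sound and essentially the paper's (the paper phrases the fiber count as ``both morphisms are finite of degree $2^5$ over $\P^\vee$'' rather than exhibiting the torsor structure, but the content is the same). Your parity argument, however, takes a genuinely different and in one respect stronger route: the paper only produces the exact sequence $0\to H^0(\Gamma_6,\cO(1))\to H^0(\widetilde\Gamma_6,\pi^*\cO(1)(\widetilde D))\to H^0(\Gamma_7,M)$, verifies the parity at a single even $M$ with $H^0(\Gamma_7,M)=0$ (supplied by Catanese), and then propagates the conclusion to all even $M$ by irreducibility of the source with fixed parity. Your rank comparison --- using that evaluation of $H^0(\Gamma_6,\cO(2))$ and of $H^0(\Gamma_1,\cO(2))$ at the six nodes have the same image because both factor through $H^0(\P,\cO(2))\twoheadrightarrow H^0(\Gamma_1,\cO(2))$, so that $\Im\beta=\operatorname{diag}(c_i)\cdot\Im\alpha$ --- yields the identity $h^0(\pi^*\cO(1)(\widetilde D))=h^0(\Gamma_7,M)+3$ pointwise, which gives the parity swap for every $M$ without any irreducibility or specialization argument. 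This is a clean improvement, modulo the (fixable) care needed to see that the signs entering $\alpha$ and $\beta$ agree up to a global flip, which is exactly the $\sigma$-ambiguity and does not affect ranks.

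The genuine gap is in your last step. You reduce $\theta(M_X)=\rho([c_X])$ to the claim that the gluing signs of $M_X$ ``with respect to the trivializations induced by $\tilde p_i$ are all equal,'' call this the main obstacle, and then write ``granted this uniformity.'' That leaves the key assertion unproved --- and the sign analysis you sketch is not actually the right way to close it. The point is that your $\widetilde D(M)$ has an intrinsic description that makes no reference to trivializations: the $2$-torsion sheaf $M(-2)$ defines a double \'etale cover of $\Gamma_7$ whose restriction over $\Gamma_1$ splits into two sections $\Gamma_1^1\sqcup\Gamma_1^2$, and $\widetilde D(M)$ is (up to $\sigma$) the divisor $\Gamma_1^1\cap\widetilde\Gamma_6$, i.e.\ the set of lifts of the $p_i$ lying on one component; this is precisely how the paper defines $\theta$. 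For $M=M_X$, Theorem \ref{tcM}.b identifies that cover with the quadric-net cover of \S\ref{g6}, for which the component $\Gamma_1^1$ is singled out by the $3$-plane $\P^3_W$ and satisfies $\Gamma_1^1\cap\widetilde\Gamma_6=\tilde p_1+\dots+\tilde p_6$ \emph{by definition}. Hence $\theta(M_X)=[\tilde p_1+\dots+\tilde p_6]=[\sigma(\tilde p_1+\dots+\tilde p_6)]=\rho([c_X])$ in $S/\sigma$, the last equality being the computation of \S\ref{s117}. No local comparison of $\ker A(p_i)$ with families of $3$-planes, and no appeal to simple connectedness of $\Gamma_1$, is needed: the ``uniformity'' is automatic once you phrase $\widetilde D(M)$ via the component $\Gamma_1^1$ rather than via node-by-node gluing scalars.
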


(The map $\rho$ was defined in \S\ref{s117}.)

\begin{proof}
For any invertible theta-characteristic $M$ on $\Gamma_7=\Gamma_6\cup\Gamma_1\subset \P$ such that $M\vert_{\Gamma_6}\isom \eta(2)$, the invertible sheaf $M(-2)$ has order $2$ in  $J(\Gamma_7)$, hence defines a   double \'etale cover $\pi:\widetilde \Gamma_7\to \Gamma_7$ which induces the given cover over $\Gamma_6$. The inverse image of $\Gamma_1$ splits as the disjoint union of two rational curves $\Gamma_1^1$ and $\Gamma_1^2$, and the intersections $\Gamma_1^i\cap\widetilde \Gamma_6$ 
 define divisors $\widetilde D =\tilde p_1+\dots+\tilde p_6$ and $\sigma^*\widetilde D $, hence a well-defined point  in  $  S/\sigma$.
 
 This defines a morphism $\theta$ which is compatible with the morphisms to $\P^\vee$. These morphisms are both finite of degree $2^5$ (\cite{har}, Theorem 2.14), hence $\theta$ is an open embedding. Since $S/\sigma$ is smooth with two components, the set of $M$ as in the statement of the theorem, with fixed parity, is smooth and irreducible.
 
 Finally, for any $s\in H^0(\widetilde \Gamma_6,\pi^*\cO(1)(\widetilde D))$, set
$$s^\pm=\sigma^*s\cdot s_{ \widetilde D}\pm s\cdot s_{\sigma^*\widetilde D }.
$$
We have
$$s^-\in H^0(\widetilde \Gamma_6,\pi^*\cO(2))^-\isom H^0( \Gamma_6, M\vert_{\Gamma_6})$$
and
$$s^+\in H^0(\widetilde \Gamma_6,\pi^*\cO(2))^+\isom H^0(\Gamma_6,  \cO(2)) \isom H^0( \P, \cO(2)).$$
Since the points $p_i=\pi(\tilde p_i)$, $i\in\{1,\dots,6\}$, are distinct, we have an   exact sequence
\begin{equation}\label{hhh}
0 \to H^0(\Gamma_7 ,M) \to H^0(\Gamma_6 ,M\vert_{\Gamma_6})\oplus H^0(\Gamma_1 ,M\vert_{\Gamma_1}) \to \bigoplus_{i=1}^6\C_{p_i},
\end{equation}
and since $s^+(p_i)=s^-(p_i)$, the pair $(s^-,s^+\vert_{\Gamma_1})$ defines an element of $H^0(\Gamma_7 ,M)$. Since $s^-=0$ if and only if $s\in s_{ \widetilde D}\cdot H^0( \widetilde \Gamma_6,\pi^*\cO(1) )$, we obtain an exact sequence
\begin{equation}\label{sexa}
0\to H^0( \Gamma_6,\cO(1) ) \to H^0(\widetilde \Gamma_6,\pi^*\cO(1)(\widetilde D))\to
H^0(\Gamma_7 ,M).
\end{equation}
Since $\Gamma_6$ is general, there is, by \cite{catas}, Theorem 7 or Proposition 13, a tranverse line $\Gamma_1$ and an even theta-characteristic   $M$ on $\Gamma_7=\Gamma_6\cup\Gamma_1$ such that $H^0(\Gamma_7,M)=0$. Because of (\ref{sexa}),  $\theta(M)$ is    in $S^{\rm odd}/\sigma$; we proved above that the set of all even  theta-characteristics is irreducible, hence it must map to $S^{\rm odd}/\sigma$, and odd 
  theta-characteristics must map to $S^{\rm even}/\sigma$.
\end{proof}
 
 \begin{coro} \label{reco}
A general nodal $X$ of type $\cX_{10}$ can be reconstructed, up to projective isomorphism, from the double \'etale cover $\pi:\widetilde \Gamma_6\to \Gamma_6$  and the point 
$\rho([c_X])$ of $S^{\rm odd}/\sigma$.
\end{coro}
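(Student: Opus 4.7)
The plan is to chain together the reconstruction results already established in Theorem \ref{tcM} and Proposition \ref{MS}. Given the data $(\pi, \rho([c_X]))$, the goal is to first extract an even theta-characteristic on a septic of the form $\Gamma_6 \cup \Gamma_1$, and then reconstruct the projective variety $X_O$, and finally $X$ itself.

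First, I would compose the given point of $S^{\rm odd}/\sigma$ with the natural map $S/\sigma \to \P^\vee$ to recover the line $\Gamma_1 \subset \P$; together with $\Gamma_6$ this assembles the septic $\Gamma_7 = \Gamma_6 \cup \Gamma_1$. I would then invoke Proposition \ref{MS}: since $\theta$ is an open embedding and the point $\rho([c_X])$ lies in its image (by the last assertion of that proposition), there is a unique even invertible theta-characteristic $M$ on $\Gamma_7$ with $M\vert_{\Gamma_6} \simeq \eta(2)$ such that $\theta(M) = \rho([c_X])$, and $M = M_X$. Thus the pair $(\Gamma_7, M_X)$ is recovered from the corollary's input data.

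Next, I would apply Theorem \ref{tcM}(c): the pair $(\Gamma_7, M_X)$ determines the variety $X_O \subset \P^6_O$ up to projective isomorphism, via the symmetric resolution of Remark \ref{conv}, which explicitly produces the net of quadrics cutting out $X_O$. Finally, I would recover $X$ from $X_O$ as in the paragraph preceding \S\ref{pw}: the pencil $\Gamma_1 \subset \P$ cuts out the fourfold $W_O$, which is projectively isomorphic to the image of $W$ under $p_O$, and $X$ is the inverse image of $X_O$ under the birational map $W \dra W_O$. Since all rank-$6$ pencils of quadrics in $\P^6$ are projectively equivalent and $W$ is unique up to $\PGL(V_5)$, this last step is canonical up to projective isomorphism.

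The only nontrivial step is Step 2, where one must ensure both that $\theta^{-1}$ is well-defined on the image $\rho([c_X])$ (which follows from $\theta$ being an open embedding, applied to the general $X$) and that the parity constraint selects the correct component, so that $M_X$ is indeed even; this is exactly what the final assertion of Proposition \ref{MS} provides. The remaining steps are then formal applications of Theorem \ref{tcM}(c) and the birational correspondence $W \dra W_O$, so no further obstacle arises.
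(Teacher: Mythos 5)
Your proposal is correct and follows exactly the chain the paper intends for this corollary (which it leaves implicit): recover $\Gamma_1$ from the projection $S/\sigma\to\P^\vee$, invert the open embedding $\theta$ of Proposition \ref{MS} to obtain the even theta-characteristic $M_X$ on $\Gamma_7=\Gamma_6\cup\Gamma_1$ from $\theta(M_X)=\rho([c_X])$, apply Theorem \ref{tcM}(c) to reconstruct $X_O$, and pull back along $W\dra W_O$ to get $X$. Nothing is missing.
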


\begin{rema}\label{proper}
The map $\theta$ is never an isomorphism: even if we allow lines $[\Gamma_1]\in\P^\vee$ which are (simply)  tangent to $\Gamma_6$ at a point $p_1$ (this happens when the quadric $\Omega_O$ is tangent to the cubic curve $C_O$), we only get  elements of $S/\sigma$ of the type $2\tilde p_1+\tilde p_3+\dots+\tilde p_6$. 

To fill up the remaining  16 points $ \tilde p_1+\sigma(\tilde p_1)+\tilde p_3+\dots+\tilde p_6$ of $S/\sigma$ above $[\Gamma_1]$, we need to let our nodal threefold ``degenerate'' to an $X$ with a node $O$ in the orbit $O_3$ (the pencil of quadrics that defines $W_O$ then contains a unique quadric of rank 5 and $\Gamma_6$ becomes tangent to $\Gamma_1$ at the corresponding point).
  Similarly, bitangent  lines $\Gamma_1$ to $\Gamma_6$ correspond to the case where the node $O$ is in the orbit $O_2$ (there are then two quadrics of rank 5 in the pencil $\Gamma_1$). Although  what should be done for flex lines is not clear, it seems likely that for $\Gamma_6$ general plane sextic, there should exist a  family of nodal $X$ parametrized by a suitable {\em proper} family of (even, possibly noninvertible) theta-characteristics on the union of $\Gamma_6$ and {\em any} line, isomorphic over $\P^\vee$ to  $S^{\rm odd}/\sigma$. This would fit with the results of \cite{dim}, where a   proper surface  contained in a general fiber of the period map is constructed: as explained in \S\ref{epm}, this surface degenerates in the nodal case to $S^{\rm odd}/\sigma$.
 \end{rema}

 \subsection{More on Verra solids} \label{vs}
 
  Let $\Pi_1$ and $\Pi_2$ be two copies of $\P^2$. Recall from \S\ref{veso} that a Verra solid is a  general  (smooth) hypersurface $T\subset \Pi_1\times\Pi_2$ of bidegree $(2,2)$. Each projection $\rho_i:T\to \Pi_i$ makes it into a conic bundle with discriminant curve  a smooth plane sextic $\Gamma_{6,i}\subset\Pi_i$ and associated  connected double \'etale covering $\pi_i:\widetilde \Gamma_{6,i}\to \Gamma_{6,i}$. We have
$$J(T)\isom  \Prym(\widetilde \Gamma_{6,1}/\Gamma_{6,1})\isom  \Prym(\widetilde \Gamma_{6,2}/\Gamma_{6,2}).
$$

The special subvariety $  S_i$ associated with the linear system $|\cO_{\Pi_i}(1)|$ is the union of two smooth connected surfaces $S^{\rm odd}_i$ and $S^{\rm even}_i$ (\S\ref{seso}).

Let $\cC$ the subscheme of the Hilbert scheme of $T$ that parametrizes   reduced connected purely $1$-dimensional subschemes of $T$ of degree $1$ with respect to both $\rho_1^*\cO_{\Pi_1}(1)$  and  $\rho_2^*\cO_{\Pi_2}(1)$.
For $T$ general, the scheme $\cC$ is a smooth   surface and a general element of each irreducible component corresponds to a smooth irreducible curve in $T$ (\cite{ver}, (6.11)). 

\begin{prop}\label{p115}
For $T$ general, the surfaces $\cC$, $S^{\rm even}_1$, and $S^{\rm even}_2$ are smooth, irreducible, and isomorphic.
\end{prop}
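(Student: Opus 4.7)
\emph{Plan.} I would construct a natural morphism $\Phi_i : \cC \to S_i^{\rm even}$ for each $i=1,2$ and show each is an isomorphism. Granted this, the three surfaces are canonically isomorphic; smoothness of $\cC$ is recalled from \cite{ver},~(6.11) and transports across $\Phi_i$, while irreducibility and smoothness of the $S_i^{\rm even}$ are already available from \cite{spe},~prop.~3 (recalled in \S\ref{seso}) since $\Gamma_{6,i}$ is general.

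To define $\Phi_i$, let $C\in\cC$; as $C$ has bidegree $(1,1)$, the restriction $\rho_i|_C$ is birational onto a line $L_i=\rho_i(C)\subset \Pi_i$ and $C$ meets each fibre of the conic bundle $\rho_i|_{\rho_i^{-1}(L_i)}\colon \rho_i^{-1}(L_i)\to L_i$ in a single point (since $C\cdot\rho_i^*\cO_{\Pi_i}(1)=1$). For each of the six points $p_k\in L_i\cap \Gamma_{6,i}$ the fibre splits as a pair of lines $\ell_k^\pm$ exchanged by $\sigma_i$; the component met by $C$ lifts $p_k$ to a well-defined point $\tilde p_k\in\pi_i^{-1}(p_k)$, and I set $\widetilde D_i:=\sum_{k=1}^{6}\tilde p_k$. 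Since $\pi_i(\widetilde D_i)=L_i\cap\Gamma_{6,i}$, this lies in $S_i\subset \widetilde\Gamma_{6,i}^{(6)}$, giving $\Phi_i([C])=\widetilde D_i$. Because $T\cap(L_1\times L_2)=C+C'$ splits as a sum of two $(1,1)$-curves, swapping $C$ with $C'$ is a fixed-point-free involution $\iota_\cC$ of $\cC$, and $\Phi_i\circ\iota_\cC=\sigma_i\circ\Phi_i$; in particular the image is $\sigma_i$-stable.

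The substantive step is showing that $\Phi_i$ lands in $S_i^{\rm even}$ and is invertible. Given $\widetilde D_1\in S_1^{\rm even}$, the unique line $L_1\in \Pi_1^\vee$ with $L_1\cap\Gamma_{6,1}=\pi_1(\widetilde D_1)$ cuts out the smooth conic bundle surface $\rho_1^{-1}(L_1)\to L_1\isom \P^1$, and one must show that the even parity of $\widetilde D_1$ is exactly the condition guaranteeing the existence of a $(1,1)$-curve $C\subset T\cap(L_1\times\Pi_2)$ meeting, at each reducible fibre $\ell_k^+\cup\ell_k^-$, the component prescribed by $\tilde p_k$, with $C$ unique up to the residuation $C\mapsto C'$. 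I would establish this by decomposing $H^0(\widetilde\Gamma_{6,1},\pi_1^*\cO(1)(\widetilde D_1))$ into $\sigma_1$-eigenspaces along the lines of (\ref{sexa}) and the proof of Proposition~\ref{MS}: the classical association of cohomology classes on the discriminant double cover to multisections of a conic bundle (\cite{bbb}) attaches to $C$ a distinguished section, which both pins down the parity of $h^0$ and, read in reverse, furnishes the inverse $\Psi_1$. The same argument for $i=2$ yields $\cC\isom S_2^{\rm even}$, and in particular the two special surfaces become isomorphic via $T$. The main obstacle is this coupled parity/reconstruction statement; here I would lean directly on Verra's identification $J(T)\isom\Prym(\pi_1)\isom\Prym(\pi_2)$ and his Abel--Prym analysis of the $S_i^{\rm even}$ (\cite{ver}), which were designed precisely to handle such existence-of-section questions for the conic bundles $\rho_i$.
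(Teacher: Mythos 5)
Your construction of the map $\cC\to S_i$ (push a bidegree-$(1,1)$ curve down to a line $L_i\subset\Pi_i$ and record which component of each of the six reducible fibres over $L_i\cap\Gamma_{6,i}$ it meets) is exactly how the paper starts, and your observation about the residual involution $\iota_\cC$ with $\Phi_i\circ\iota_\cC=\sigma_i\circ\Phi_i$ is correct, though the paper does not need it. The problem is that everything you label ``the substantive step'' --- that $\Phi_i$ is invertible and that its image is the even rather than the odd component --- is precisely what is not proved in your proposal. The claim that the parity of $h^0(\widetilde\Gamma_{6,1},\pi_1^*\cO(1)(\widetilde D_1))$ is ``exactly the condition guaranteeing the existence'' of a $(1,1)$-curve realizing $\widetilde D_1$ is the whole content of the proposition restated, and deferring it to an unspecified eigenspace decomposition ``along the lines of (\ref{sexa})'' and to ``Verra's Abel--Prym analysis'' is not an argument: Verra's paper is only invoked in this section for the smoothness of $\cC$, and the paper has to supply the identification with $S^{\rm even}_i$ itself. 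You also do not address irreducibility of $\cC$, which is part of the statement and does not come for free from \cite{ver}, (6.11) (that reference gives smoothness and a description of general points of each component, not connectedness).

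For comparison, the paper sidesteps any direct parity computation. It shows $\cC\to\Pi_1^\vee$ is finite of degree $32$ by a classical count: for a general line $L_1$, the surface $F_1=\rho_1^{-1}(L_1)$ is a double plane branched along a smooth quartic $\Gamma_4\subset\Pi_2$, the curves of $\cC$ inside $F_1$ are the section-components of preimages of bitangents of $\Gamma_4$, and of the $28$ bitangents exactly $12$ are spoiled by the components of the reducible fibres of $F_1\to L_1$, giving $2\times(28-12)=32$. Since $S_i^{\rm even}\to\Pi_i^\vee$ and $S_i^{\rm odd}\to\Pi_i^\vee$ are also finite of degree $32$ (Harris), and since an argument with arbitrary (not just general) lines shows every component of $\cC$ dominates $\Pi_1^\vee$, the smooth surface $\cC$ must be irreducible and map isomorphically onto a single component of $S_i$. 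Finally, the component is pinned down not by a cohomological parity argument but by exhibiting one explicit curve: using the birational model of $T$ as a nodal $X$ (Theorem \ref{nodver} and Remark \ref{newr}), the $(1,1)$-curve $\ell_1^-$ maps to $\sigma(\tilde p_1)+\tilde p_2+\dots+\tilde p_6$, which lies in $S^{\rm even}$ because $\sigma(\tilde p_1+\dots+\tilde p_6)=\rho([c_X])$ lies in $S^{\rm odd}$ by Proposition \ref{MS} and flipping a single point changes the parity. If you want to salvage your approach, you would need to actually prove the equivalence between even parity and existence of a section of $F_1\to L_1$ through prescribed fibre components; the degree count above is the standard way to avoid having to do so.
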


\begin{proof}
Let $[c]$ be a general element of $ \cC$. Each projection $\rho_i\vert_c :c\to  \Pi_i$ induces an isomorphism onto a line that meets $\Gamma_{6,i}$ in six distinct points. For each of these points $p$, the curve $c$ meets exactly one of the components of $\rho_i^{-1}(p)$, hence   defines an element of 
 $  S_i$.
This defines a rational map
$$ \cC\dra   S_i $$ over $\Pi^\vee_i$.

Conversely, let $L_i\subset \Pi_i$ be a general line. By Bertini, the surface $F_1=\rho_1^{-1}(L_1)$ is smooth and connected. It is ruled over $L_1$ with exactly six reducible fibers. The projection $\rho_2\vert_{F_1}:F_1\to\Pi_2$ is a double cover ramified along a smooth quartic $\Gamma_4\subset\Pi_2$, and $K_{F_1}\equiv -\rho_2^* L_2$. Let $L$ be a bitangent to $\Gamma_4$. The curve $\rho_2^{-1}(L)$ has two irreducible components and total degree $2$ over $\Pi_1$. Either one component is contracted by $\rho_1$ and it is then one of the $12$ components of the reducible fibers of $\rho_1\vert_{F_1}:F_1\to L_1$, or both components are sections and belong to $\cC$. Since there are $28$ bitangents, the degree of $\cC\to \Pi^\vee_1$ is $2\times (28-12)=32$.

More generally, for {\em any} line $L_1\subset \Pi_1$,  the surface $F_1=\rho_1^{-1}(L_1)$ is irreducible,  maps 2-to-1 to $\Pi_2$ with ramification a quartic, and only  finitely many curves are contracted. Any smooth   $[c]\in \cC$ with $c\subset F_1$ must map to a line everywhere tangent to the ramification, and there is only a  finite  (nonzero) number of such lines. 

It follows that every component of  $\cC$ dominates $ \Pi^\vee_1$. Moreover,   the morphisms    $\cC\to \Pi^\vee_i$, $ S^{\rm odd}_i\to \Pi^\vee_i$, and  $S^{\rm even}_i\to\Pi^\vee_i$ are all finite of degree $32$. It follows that the smooth surface $\cC$ is irreducible and maps isomorphically to a component of $  S_i $. It remains to prove that this component is  $S^{\rm even}_i$.

According to Theorem \ref{nodver}, we may assume that $T$ is obtained from a nodal $X$ of type $\cX_{10}$, as explained in \S\ref{veso}. In Remark \ref{newr}, we constructed  a  curve  $\ell^-_1$ of bidegree $(1,1)$ whose image in $S$ is $\sigma(\tilde p_1)+\tilde p_2+\dots+\tilde p_6$. Since
$\rho ([c_X])=\sigma (\tilde p_1+\dots+ \tilde p_6)$ (\S\ref{s117}) and $\rho ([c_X])\in S^{\rm odd}$
(Proposition \ref{MS}), this finishes the proof.
\end{proof}


\section{The extended period map}\label{anoth}

\subsection{Intermediate Jacobians}\label{ijac}
The intermediate Jacobian 
$J(X) $ of our nodal     $X$ appears as an extension
\begin{equation}\label{exte}
1\to\C^\star\to J(X)\to J(\widetilde X)\to 0,
\end{equation}
with extension class $e_X\in J(\widetilde X)/\pm1$. Since $\tilde X$ is birationally isomorphic to a general Verra solid $T$ (Theorem \ref{nodver}),   $J(\widetilde X)$ and $J(T)$, having no factors that are Jacobians of curves,   are isomorphic  (see the classical argument used for example in the proof of \cite{dim}, Corollary 7.6). In particular, by \cite{ver}, we have
\begin{equation}\label{g66}
J(\widetilde X)\isom \Prym(\widetilde \Gamma_6/\Gamma_6) \isom  \Prym(\widetilde \Gamma^\star_6/\Gamma^\star_6).\end{equation}

 
 \subsection{The extension class $e_X$}\label{extex}
Choose a general line $\ell\subset X_O$. A point  of $\widetilde \Gamma_6$ corresponds to a family of 3-planes contained in a singular $\Omega_p$. In this family, there is a unique 3-plane that contains $\ell$, and its intersection with $X_O$ is the union of $\ell$ and a rational normal cubic meeting $\ell$ at two points. So we get a family of curves on $\tilde X$ parametrized by $\widetilde \Gamma_6$ hence an Abel-Jacobi map
$$J(\widetilde \Gamma_6)\to J(\widetilde X)
$$
  which vanishes on $\pi^*J(\Gamma_6)$ and induces (the inverse of) the isomorphism (\ref{g66}) (here, $\Prym(\widetilde \Gamma_6/\Gamma_6)$ is seen as the quotient $J(\widetilde \Gamma_6)/ \pi^*J(\Gamma_6)$). Also, there is a natural map
$$\beta:S\to    \Prym(\widetilde \Gamma_6/\Gamma_6)\subset J(\widetilde \Gamma_6) $$
 defined up to translation (here, $\Prym(\widetilde \Gamma_6/\Gamma_6)$ is seen as the kernel of the norm morphism  $\Nm : J(\widetilde \Gamma_6)\to J(\Gamma_6)$)  which can be checked to be a closed embedding (as in \cite{spe}, C, p.~374). Finally, we have
  an Abel-Jacobi map
$$\alpha:\widetilde F_m(X) \to J(\widetilde X)
$$
(also defined up to translation). Logachev proves\footnote{This is \cite{lo}, Proposition 5.16, although the leftmost map in the diagram there should be $\Phi$, not $2\Phi$.}  that the   diagram
$$\xymatrix
{
\widetilde F_m(X)  \ar[r]^-\sim_-\rho \ar[d]^{\alpha}& S^{\rm odd}\ar[d]^\beta\\
J(\widetilde X)&\Prym(\widetilde \Gamma_6/\Gamma_6) \ar[l]_-\sim 
}
$$
commutes  up to a translation. In particular, $\alpha$ is a closed embedding.

 The extension class $e_X$ of (\ref{exte}) is the  image in $J(\widetilde X)/\pm 1$ by the Abel-Jacobi map of the difference between the (homologous) lines $\ell^+$ and $\ell^-$ from the two rulings of the smooth quadric surface $Q =\P^3_W\cap\Omega_O$  (\cite{col}, Theorem (0.4)).
 In particular, if $\nu: \widetilde F_m(X)\to F_m(X)$ is the normalization, it follows from \S\ref{cto} that we have for all $[c]\in \widetilde \Gamma_6^\star$
 $$e_X=\alpha([c^+])-\alpha([c^-]),
 $$
where $\nu^{-1}([c])=\{[c^+],[c^-]\}$. Since $\alpha$ is injective, $e_X$ {\em is  nonzero.}

 Note that $\alpha\circ\iota+\alpha$ is constant on $\widetilde F_m(X) $, say equal to $C$. We have 
\begin{equation}\label{tra}
\alpha(\widetilde\Gamma^\star_{6,-}) = \alpha(\widetilde\Gamma^\star_{6,+}) -e_X,
 \end{equation}
the involution $\sigma^\star_\pm$ on $\alpha(\widetilde\Gamma^\star_{6,\pm})$ is given by $x\mapsto C\pm e_X-x$, and 
these curves are Prym-canonically embedded   in $J(\widetilde X)\isom  \Prym(\widetilde \Gamma^\star_6/\Gamma^\star_6)$.

\begin{rema}\label{compact}
 The semi-abelian variety $J(X)$   can be seen as the complement in the $\P^1$-bundle $\P(\cO_{J(\widetilde X)}\oplus P_{e_X})\to J(\widetilde X)$ (where $P_{e_X}$ is the algebraically trivial line bundle on $J(\widetilde X)$  associated with   $e_X$) of the two canonical sections $J(\widetilde X)_0$ and $J(\widetilde X)_\infty$. A   (nonnormal) compactification $\overline{J(X)}$  is obtained by glueing these two sections by the translation $x\mapsto x+e_X$ and it is the proper limit of intermediate Jacobians of smooth threefolds of type $\cX_{10}$. The Abel-Jacobi map
 $ F_g(X) \dra J(X) $ 
then defines an embedding  (use Logachev's description of the nonnormal surface$ F_m(X)$ given in \S\ref{cto} and (\ref{tra}))
$$ F_m(X) \hookrightarrow  \overline{J(X)}. $$
 \end{rema}

\subsection{Extended period map}\label{epm}
Let  $\cN^\star_{10}$ (resp.~$ \partial \cN^\star_{10}$) be the stack of smooth or nodal threefolds of type $\cX_{10}$ (resp.~of nodal threefolds of type $\cX_{10}$), and let $ \cA^\star_{10}$ (resp.~$ \partial \cA^\star_{10}$) be the stack of \ppavs\ of dimension 10 and their rank-1 degenerations (resp.~of rank-1 degenerations) (\cite{mum}). There is an extended period map $\wp^\star:\cN^\star_{10}\to \cA^\star_{10}$ and a commutative diagram
$$\xymatrix@C=50pt@M=7pt@R=8pt
{
 \cN^\star_{10}\ar[d]_{\wp^*}&\partial \cN^\star_{10}\ar[d]_{\partial\wp^*}\ar@<0.5ex>@{_{(}->}[l]
 \ar@<0.6ex>[dr]^-{\pi}
 \ar@<-0.6ex>[dr]_-{\pi^\star}\\
\cA^\star_{10}& \partial\cA^\star_{10}\ar[d]_p\ar@<0.5ex>@{_{(}->}[l] \ar[d] &\left\{ {\begin{matrix}\hbox{connected double}\\\hbox{\'etale covers}\\\hbox{of plane sextics}\end{matrix}}\right\}=\cP\ar[dl]^-{\rm Prym}\\
&\cA_9
}
$$
where the map $ p\circ \partial\wp^*={\rm Prym}\circ \pi={\rm Prym}\circ \pi^\star$ sends a nodal threefold of type $\cX_{10}$ to the intermediate Jacobian of its normalization.

A dimension count shows that $  \partial\cN^\star_{10}$ is irreducible of dimension 21. By \cite{ver}, Corollary 4.10, the map ${\rm Prym}$ is 2-to-1 onto its 19-dimensional image. The irreducible variety $\cP$ therefore carries a birational involution $\sigma$ and $\pi^\star=\sigma\circ\pi$. By  Corollary \ref{reco}, the general fiber of $\pi$ is birationally the surface $S^{\rm odd}/\sigma$ so, with the notation of \S\ref{vs}, if $T$ is a general Verra solid, the fiber $(p\circ \partial\wp^*)^{-1}(J(T))$ is birationally the union of the two special surfaces $S^{\rm odd}_1/\sigma_1$ and $S_2^{\rm odd}/\sigma_2$.

We proved in \cite{dim}, Theorem 7.1, that   $\wp^* (\cN^\star_{10})$ has dimension 20. It follows  that $ \partial\wp^* (\partial\cN^\star_{10})$ has dimension 19, hence the  fibers of $\pi$ must be contracted by $ \partial \wp^*$ . This can also be seen by checking that the various nodal threefolds  corresponding to general points in this fiber differ by conic transformations (\S\ref{ct}) hence have same intermediate Jacobians (there is a birational isomorphism between them which is define at the nodes). 

From Lemma \ref{lemp}, we deduce that a line transform of  a nodal $X$ produces another nodal $X_\ell$ such that the   covers $\pi(X_\ell)$ and $\pi^\star(X)$ are isomorphic. Since their intermediate Jacobians are isomorphic (there is a birational isomorphism between them which is defined at the nodes), the   surfaces $S^{\rm odd}_1/\sigma_1$ and $S_2^{\rm odd}/\sigma_2$ are in the same fiber of $\wp^\star$.

 It follows that $p$ is birational on $\wp^* (\partial\cN^\star_{10})$:  the extension class  $e $ is canonically attached to  a general Verra solid. Moreover, since a general nonempty fiber of $ \partial\wp^*$ is the union of two distinct   irreducible surfaces (obtained by conic and line transformations), {\em the two smooth irreducible  surfaces   contained in a general nonempty fiber of $\wp^\star$} constructed in \cite{dim} {\em are disjoint}.\footnote{Unfortunately, we cannot deduce from our description  of a general nonempty fiber of $\partial\wp^*$ that a general nonempty fiber of $\wp^\star$ consists only of these two surfaces, since there are serious issues of properness here.}

  \section{Singularities of the theta divisor}\label{sings}
  
We keep the notation of \S\ref{vs}.  In \cite{ver}, Theorem 4.11, Verra proves that the singular locus of a theta divisor $\Xi$ of the intermediate Jacobian of a general Verra solid $T$   has (at least) three components, all $3$-dimensional:
  $$\Sing^{\rm ex}_{\pi_i}(\Xi)=\pi_i^*H_i+ \widetilde\Gamma_{6,i}+S_i^{\rm odd}$$ 
 for $i\in\{1,2\}$, 
  and another (union of) component(s) 
  $$\Xi_T\subset  \Sing^{\rm st}_{\pi_1}(\Xi)\cap \Sing^{\rm st}_{\pi_2}(\Xi)$$
   which Verra calls {\em distinguished.}\footnote{\label{oe} 
   Verra does not prove that $\Xi_T$ is irreducible (\cite{ver}, Remark 4.12). Recall that the singularities of a theta divisor $\Xi$ of the Prym variety of a double \'etale covering $\pi$ are of two kinds: the {\em stable  singularities} $\Sing^{\rm st}_\pi(\Xi)$, and the {\em exceptional singularities} $\Sing^{\rm ex}_\pi(\Xi)$ (see \cite{deb}, \S2, for the definitions).}

What follow    are some speculations on the singular locus of the intermediate Jacobian  of a general Fano threefold $X'$ of type $\cX_{10}$. We proved in \cite{dim} that a general fiber of the period map has two connected components $F(X')$ and $F^\star(X')$, which are smooth proper surfaces.

\begin{conj}\label{cs}
The singular locus of the theta divisor of the intermediate Jacobian of a general threefold $X'$ of type $\cX_{10}$ has dimension 4 and contains a unique   component of that dimension; this component is a translate of   $F(X')+ F^\star(X')$. 
\end{conj}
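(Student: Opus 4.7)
\medskip
\noindent\textbf{Proof proposal.}
The plan is to split the conjecture into two parts: constructing a $4$-dimensional subvariety of $\Sing(\Theta)$ that is a translate of $F(X')+F^\star(X')$, and classifying all $4$-dimensional components by degeneration to a nodal $X$ together with Verra's description of the singular locus of the theta divisor of a Verra solid.

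For the construction, recall from \cite{dim}, Theorem 6.4, that the points of $F(X')$ and of $F^\star(X')$ parametrize smooth threefolds of type $\cX_{10}$ related to $X'$ by conic and line transformations respectively, all having the same intermediate Jacobian. Choose Abel--Jacobi embeddings $\alpha:F(X')\hookrightarrow J(X')$ and $\alpha^\star:F^\star(X')\hookrightarrow J(X')$ (defined up to translation) by sending $[Y]$ to the Abel--Jacobi class of a distinguished family of homologous curves on $Y$; the smooth analogue of Logachev's result used in \S\ref{extex} should show these maps are closed immersions. Since both surfaces carry the involutions $\iota$ and $\iota^\star$, their Abel--Jacobi images are symmetric up to translation, so a Welters-type criterion yields $\alpha(F(X'))+\alpha^\star(F^\star(X'))\subset \Theta+t$ for some $t$, and in fact consists entirely of singular points of $\Theta+t$ because the local equation of $\Theta$ acquires at least quadratic vanishing from the positive-dimensional freedom in representing the ambient homology class by homologous curves as $[Y]$ and $[Y^\star]$ vary independently. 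The disjointness of $F(X')$ and $F^\star(X')$ in a general fiber of $\wp^\star$, proved at the end of \S\ref{epm}, then implies by a dimension count that this sum has dimension exactly~$4$.

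For uniqueness, degenerate $X'$ to a general nodal $X$ along a one-parameter family with smooth general fiber and nodal central fiber $X$. By Remark \ref{compact}, the associated family of intermediate Jacobians has as central fiber the semi-abelian variety $J(X)$, which compactifies to a $\P^1$-bundle over $J(\widetilde X)\isom J(T)$ with the two boundary sections glued by translation by $e_X$. The theta divisors $\Theta_t$ specialize to a divisor $\Xi$ on $\overline{J(X)}$, and by semicontinuity every $4$-dimensional component of $\Sing(\Theta_t)$ specializes to a subvariety of $\Sing(\Xi)$ of dimension at least $4$, which in turn projects to a component of $\Sing(\Xi(T))\subset J(T)$ of dimension at least $3$. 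Verra's Theorem 4.11 in \cite{ver} classifies the latter as the three exceptional components $\pi_i^*H_i+\widetilde\Gamma_{6,i}+S_i^{\rm odd}$ ($i\in\{1,2\}$) and the distinguished stable component $\Xi_T$. By the identification in \S\ref{epm} of the limits of $F(X'_t)$ and $F^\star(X'_t)$ with $S_1^{\rm odd}/\sigma_1$ and $S_2^{\rm odd}/\sigma_2$, the $4$-dimensional component produced in the first step specializes to the preimage of $\Xi_T$, so uniqueness reduces to ruling out that Verra's three exceptional components lift to $4$-dimensional components of $\Sing(\Theta_t)$ on the smooth side.

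The main obstacle is precisely this last exclusion. Verra's exceptional components involve the boundary-adapted divisors $\pi_i^*H_i$, which are specific to the Prym structure of $J(T)$ and are expected to arise only from the rank-$1$ nature of the degeneration, hence to lift only to subvarieties of $\Sing(\Theta_t)$ of dimension at most $3$. Proving this rigorously requires a delicate local analysis of $\Xi$ near the boundary of $\overline{J(X)}$, presumably via the Mumford--Namikawa toroidal compactification of $\cA_{10}$ together with Beauville's analysis of exceptional Prym singularities. A secondary difficulty is to make the Welters-type argument of the first step fully rigorous, including the computation of the cohomology class $\alpha_*[F(X')]\cdot\alpha^\star_*[F^\star(X')]$ in $J(X')$ and the verification of the precise order of vanishing of the theta function along $\alpha(F(X'))+\alpha^\star(F^\star(X'))$.
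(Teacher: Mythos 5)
The statement you are addressing is Conjecture \ref{cs}, which the paper explicitly presents as speculation and does not prove; there is no proof in the paper to compare against, only a motivating degeneration heuristic and a consistency check. Your proposal is a strategy rather than a proof, and both of its halves have genuine gaps. In the construction step, the claim that $\alpha(F(X'))+\alpha^\star(F^\star(X'))$ lies in $\Sing(\Theta)$ ``because the local equation of $\Theta$ acquires at least quadratic vanishing from the positive-dimensional freedom in representing the ambient homology class'' has no backing: there is no Riemann singularity theorem for these intermediate Jacobians, the multiplicity of $\Theta$ at a point is not known to be computed by counting effective curve representatives, and no applicable Welters-type criterion is on record. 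This missing ingredient is exactly why the statement is a conjecture.

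The uniqueness step contains a more concrete error. By Mumford's analysis of rank-$1$ degenerations (as cited in \S\ref{sings}), the singular locus of the limit theta divisor on $\overline{J(X)}$ projects onto $\Sing(\Xi\cdot \Xi_e)$, not onto $\Sing(\Xi)$: a $4$-dimensional component of $\Sing(\Theta_t)$ can specialize into the boundary, where singularities arise from non-transversality of $\Xi$ and $\Xi_e$ at points where $\Xi$ itself is smooth. Hence your appeal to Verra's Theorem 4.11, which classifies the $3$-dimensional components of $\Sing(\Xi)$, does not bound the possible limits; what is needed is control of the $4$-dimensional components of $\Sing(\Xi\cdot\Xi_e)$, which is precisely the paper's second, equally open, conjecture --- so the reduction does not close. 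Moreover, Verra does not prove that the distinguished component $\Xi_T$ is irreducible (footnote \ref{oe}), so even the classification you invoke is incomplete. Finally, the paper's one actual computation points in the opposite direction from your ``exclusion'' step: it shows that $\Sing^{\rm ex}_{\pi_i}(\Xi)$ is contained in $\Xi_e$ \emph{and} in a translate of $S_1^{\rm odd}+S_2^{\rm odd}$, i.e.\ the exceptional components would be absorbed into the conjectural main component rather than needing to be ruled out as separate $4$-dimensional lifts.
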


When $X'$ degenerates to a nodal $X$, with associated Verra threefold $T$,  the Fano surface $F(X')$ degenerates to the special surface $S_1^{\rm odd}$, and the surface $F^\star(X')$ to $S_2^{\rm odd}$. If $(J(T),\Xi)$ is the intermediate Jacobian of $T$, the singular locus of the theta divisor 
  degenerates to a subvariety of $\overline{J(X)}$ (see Remark \ref{compact}) which projects onto $\Sing (\Xi\cdot \Xi_e)$ (see \cite{mum}, (2.4), pp.~363--364). So the degenerate version of the conjecture is the following.
  
  \begin{conj}
Let $(J(T),\Xi)$ be the  intermediate Jacobian of a general Verra threefold $T$, with canonical extension class $e$. The singular locus of $\Xi\cdot \Xi_e$ has dimension 4 and has a unique component of that dimension. This   component   is a translate of  $S_1^{\rm odd}+S_2^{\rm odd}$.
\end{conj}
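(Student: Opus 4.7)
The plan splits the conjecture into three assertions: (i) $S_1^{\rm odd}+S_2^{\rm odd}\subset \Sing(\Xi\cdot \Xi_e)$, (ii) $\dim(S_1^{\rm odd}+S_2^{\rm odd})=4$, and (iii) $\Sing(\Xi\cdot \Xi_e)$ has no other $4$-dimensional component. The guiding idea is that the extension class $e$ ``exchanges'' the contributions of the two conic bundles on $T$ to $\Sing(\Xi)$, as foreshadowed by the identity $\alpha(\widetilde\Gamma^\star_{6,-}) = \alpha(\widetilde\Gamma^\star_{6,+}) - e$ of (\ref{tra}).

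For (i), each $S_i^{\rm odd}$ sits, up to translation, as a $2$-dimensional slice of the $3$-dimensional component $\pi_i^*H_i+\widetilde\Gamma_{6,i}+S_i^{\rm odd}$ of $\Sing(\Xi)$ described by Verra. Using the two Abel-Jacobi embeddings $\alpha_i:\widetilde F_m(X)\isomto S_i^{\rm odd}\hookrightarrow J(T)$ arising from the two conic bundle structures $\rho_1,\rho_2$ on $T$, one would show that translation by $e$ identifies a singular component of $\Xi$ associated with $\pi_1$ with one associated with $\pi_2$. At a general $s_1+s_2$ with $s_i\in S_i^{\rm odd}$, this forces $\Xi$ and $\Xi_e$ either to be both singular at $s_1+s_2$ or to share a common tangent hyperplane there, via a Welters-style Gauss map computation on the Prym; in either case $\Xi\cdot\Xi_e$ is singular at that point. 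For (ii), one shows that the sum map $S_1^{\rm odd}\times S_2^{\rm odd}\to J(T)$ is generically finite, which reduces to checking that no translate of $S_1^{\rm odd}$ is contained in a divisor containing $S_2^{\rm odd}$: since $T$ is general and the two surfaces are Prym-canonically embedded via \emph{distinct} covers $\pi_1,\pi_2$, no such accidental relation can occur.

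The main obstacle is (iii). The potential $4$-dimensional contributions fall into two classes: those coming from other components of $\Sing(\Xi)$ in Verra's list (pairings with mismatched indices, and the distinguished component $\Xi_T$), and a genuine ``pure tangency'' locus of smooth points of both $\Xi$ and $\Xi_e$ with coincident tangent hyperplanes. The first kind should be excluded by a case-by-case dimension count: for each pair of components $\Sigma,\Sigma'$ of $\Sing(\Xi)$, one checks $\dim\bigl(\Sigma\cap (\Sigma'+e)\bigr)\le 3$, with equality only for the matched pair yielding $S_1^{\rm odd}+S_2^{\rm odd}$. The second kind is much more delicate, and the most natural route is via degeneration: if Conjecture \ref{cs} holds for smooth $X'_t$ specializing to the nodal $X$, then $J(X'_t)$ degenerates to the semi-abelian $J(X)$ with compactification $\overline{J(X)}$ (Remark \ref{compact}), $F(X'_t)$ and $F^\star(X'_t)$ specialize to $S_1^{\rm odd}$ and $S_2^{\rm odd}$, and the conjectural $4$-dimensional singular component $F(X'_t)+F^\star(X'_t)$ of $\Xi_{X'_t}$ specializes to a divisor projecting onto $\Sing(\Xi\cdot \Xi_e)$. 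Making this rigorous without assuming Conjecture \ref{cs}, e.g.\ by a direct infinitesimal analysis of the Gauss map restricted to $\Xi\cap \Xi_e$ using Verra's geometric picture of the Prym theta divisor, is the technical heart of the proof.
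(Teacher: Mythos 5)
The statement you were given is not proved in the paper at all: it is stated as a Conjecture, presented as the degenerate (nodal) version of Conjecture \ref{cs}, and the authors offer only a consistency check, namely that $\Sing^{\rm ex}_{\pi_i}(\Xi)$ is contained both in $\Xi_e$ (hence in $\Sing(\Xi\cdot\Xi_e)$) and in a translate of $S_1^{\rm odd}+S_2^{\rm odd}$, via the computation $h^0(\widetilde\Gamma_{6,1},\pi_1^*H_1+e)\ge 2$ together with (\ref{tra}). So there is no proof in the paper to compare yours against, and your proposal does not close the gap either.

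Concretely, two of your three steps are placeholders rather than arguments. In (i), the assertion that at a general point of a translate of $S_1^{\rm odd}+S_2^{\rm odd}$ the divisors $\Xi$ and $\Xi_e$ are ``either both singular or share a common tangent hyperplane'' is precisely what has to be proved: since $S_1^{\rm odd}+S_2^{\rm odd}$ is $4$-dimensional while every component of $\Sing(\Xi)$ in Verra's description is $3$-dimensional, a general such point is a smooth point of both $\Xi$ and $\Xi_e$ (when it lies on them), so the tangency must be exhibited explicitly, and the ``Welters-style Gauss map computation'' is invoked but never carried out. What the paper actually establishes is strictly weaker than your (i): only the $3$-dimensional locus $\Sing^{\rm ex}_{\pi_i}(\Xi)$ is shown to lie in $\Sing(\Xi\cdot\Xi_e)$. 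In (iii) you concede the difficulty yourself, and the degeneration route you propose is circular relative to the paper's own logic: the authors obtain the present statement \emph{from} Conjecture \ref{cs} by specialization (via Remark \ref{compact} and \cite{mum}), precisely because neither statement is known; assuming Conjecture \ref{cs} to prove its own degenerate consequence establishes nothing new. Your step (ii), generic finiteness of the sum map $S_1^{\rm odd}\times S_2^{\rm odd}\to J(T)$, is plausible and likely provable from the Prym-canonical embeddings, but it is the least contentious part. In short, the statement remains an open conjecture, and your text should be presented as a strategy for attacking it, not as a proof.
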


According to \S\ref{extex}  and (\ref{tra}),  there is   an embedding $ \widetilde\Gamma_{6,2}\subset S_1^{\rm odd}\subset \widetilde \Gamma_{6,1}^{(6)}$ such that 
for  all  $\widetilde D \in \widetilde \Gamma_{6,1}^{(6)}$ in this curve,
 $ \widetilde D +e $  is linearly equivalent to an effective divisor $\widetilde D'\in S_1^{\rm odd}$.
This implies
 $$\pi_1^*H_1 +e\equiv \sigma_1^*\widetilde D+\widetilde D+e\equiv \sigma_1^*\widetilde D + \widetilde D',$$
hence $h^0(\widetilde \Gamma_{6,1}, \pi_1^*H_1 + e)\ge 2$. It follows that
$   \Sing^{\rm ex}_{\pi_1}(\Xi ) 
$, hence also 
$    \Sing^{\rm ex}_{\pi_2} (\Xi)  $, is contained in $ \Xi_{e}
$. In particular, $   \Sing^{\rm ex}_{\pi_i}(\Xi ) $ is contained in the singular locus of $\Xi\cdot \Xi_e$, and is also contained in a translate of $S_1^{\rm odd}+S_2^{\rm odd}$ by (\ref{tra}).

\section{Appendix: explicit computations}\label{coor}

 \subsection{The fourfold $W$}\label{coow}
 It follows from \cite{pv}, Proposition 6.4, that there exists a basis $(e_1,\dots,e_5)$ for $V_5$ such that the pencil of skew-symmetric forms on $V_5$ that defines $W$ in $G(2,V_5)$ (see \S\ref{fw}) is spanned by $e_1^*\wedge e_4^*+e_2^*\wedge e_5^*$ and $e_1^*\wedge e_5^*+e_3^*\wedge e_4^*$. In other words, in coordinates for the basis 
 $$\cB=(e_{12},e_{13},e_{14},e_{15},e_{23},e_{24},e_{25},e_{34},e_{35},e_{45})$$
 for $\wedge^2V_5$, where $e_{ij} =e_i\wedge e_j$, the linear space $V_8$ that cuts out $W$ has equations
 $$x_{14}+x_{25}=x_{15}+x_{34}=0.$$
 The unique common maximal isotropic subspace for all forms in the pencil is
 $$U_3=\langle e_1,e_2,e_3\rangle.$$
 It contains the 
 smooth conic 
 $$c_U=(x_1^2+x_2x_3=x_4=x_5=0),$$ which parametrizes the kernels of the forms in the pencil. The unique $\beta$-plane (see \cite{dim}, \S 3.3) contained in $W$ is
 $$\Pi=G(2,U_3)=\langle  e_{12},e_{13},e_{23}\rangle$$
 and the orbits for the action of $\Aut(W)$ are (see \cite{dim}, \S 3.5)
 \begin{itemize}
\item $O_1=c_U^\vee=  (x_{23}^2+4x_{12}x_{13} =0) \subset\Pi$,
\item  $O_2=\Pi\moins c_U^\vee$,
\item  $O_3=(W\cap (x_{45}=0))\moins\Pi$,
\item   $O_4=W\moins O_3$. 
\end{itemize}

 \subsection{The fourfold $W_O$}\label{coowo}
The point $O=[e_{45}]$ is in the dense orbit  $O_4$, which can be parametrized by
$$ O_4=\{ (-ux-v^2,u^2-vy,-v,u,uv+xy,x,v,-u,y,1)\mid (u,v,x,y)\in \C^4\}
$$
in the basis $\cB$, and  $\T_{W,O}\subset \P_7$ has equations
$$x_{12}=x_{13}=x_{23}=x_{14}+x_{25}=x_{15}+x_{34}=0.$$
In particular,  $\T_{W,O}\cap\Pi=\vide$.

   The quadrics that contain $W$ and are singular at $O$ correspond, via the isomorphism (\ref{iso}), to one-dimensional subspaces $V_1\subset V_O=\langle e_4,e_5\rangle$ (see \S\ref{iden}). It follows that the projection $W_O\subset \P^6_O$  is defined by  
\begin{equation}\label{pencil}
x_{12}x_{34}-x_{13}x_{24}+x_{14}x_{23}=
   x_{12}x_{35}+x_{13}x_{14}-x_{34}x_{23}=0,
   \end{equation}
   where  $(x_{12},x_{13},x_{14},x_{23} ,x_{24},x_{34} ,x_{35})
   $  are   coordinates
   for $V_8/ \langle e_{45}\rangle$.
   It contains the $3$-plane
   $$\P^3_W=p_O(\T_{W,O})= (x_{12}=x_{13}=x_{23}=0) $$
and its singular locus is the twisted cubic $C_O\subset \P^3_W$ defined by
\begin{equation}\label{twcu} \rank\begin{pmatrix} x_{14}& x_{34}& -x_{35}\\ x_{24}&x_{14}&  x_{34}
\end{pmatrix}\le 1.
\end{equation}
Let $ \widetilde \P^6_O\to \P^6_O$ the blow-up of $\P^3_W$, where $ \widetilde \P^6_O\subset \P^6_O\times \P^2_W$ is defined by the relation
\begin{equation}\label{sys1} \rank\begin{pmatrix} a_{12}& a_{13}& a_{23}\\ x_{12}& x_{13}& x_{23}
\end{pmatrix}\le 1,
\end{equation}
where $a_{12}, a_{13},a_{23}$ are homogeneous coordinates on $\P^2_W$.
The strict transform $\widetilde \P^3_W\subset \widetilde \P^6_O$ of $\P^3_W$ is isomorphic to the blow-up of $C_O$, and the strict transform $\widetilde W_O\subset \widetilde \P^6_O$ of $W_O$  is   defined  by the  equations 
\begin{equation}\label{sys}
a_{12}x_{34}-a_{13}x_{24}+ a_{23}x_{14}=
  a_{12}x_{35}+a_{13}x_{14}- a_{23}x_{34}=0 .
\end{equation}
 It follows that $ \widetilde W_O\to W_O$ is a $\P^1$-bundle over $C_O$ and an isomorphism over $W_O\moins C_O$. Furthermore, the projection  $\widetilde W_O\to \P^2_W$ 
  is a $\P^2$-bundle, 
 $\widetilde W_O$ is smooth, and its restriction to $\widetilde \P^3_W $ 
  is a $\P^1$-bundle.

\subsection{The $\P^2$-bundles $\P(\cM_O)\to \P^2_O$  and  $\widetilde W_O\to \P^2_W$ are   isomorphic}\label{wtw}
If $V_4\subset V_5$ is a hyperplane that contains $V_O$ and is defined by the equation $b_1x_1+b_2x_2+b_3x_3=0$, the vector space $M_{V_4}= \wedge^2 V_4 \cap V_8$ defined in \S\ref{iden} has equations
$$b_1x_{14}+b_2x_{24}+b_3x_{34}=-b_1x_{34}-b_2x_{14}+b_3x_{35}=0$$
and 
$$b_2x_{12}+b_3x_{13}=b_1x_{12}-b_3x_{23}=b_1x_{13}+b_2x_{23}=0$$
 in $V_8$. It is therefore equal to the fiber of $\widetilde W_O\to \P^2_W$ at the point $a= (b_3,-b_2,b_1)$ (see (\ref{sys1}) and (\ref{sys})). This isomorphism $\P^2_O\isom \P^2_W  $ induces an isomorphism $\P^2_O\times\P^6_O\isom \P^2_W  \times\P^6_O$, hence an isomorphism 
between the $\P^2$-bundles $\P(\cM_O)\to \P^2_O$  and  $\widetilde W_O\to \P^2_W$.

\end{document}